\documentclass[11pt]{article}

\overfullrule = 0pt
\usepackage{amsmath,amssymb,amsthm,amscd,epsfig}
\topmargin -0.5in
\textheight 9.0in

\textwidth 6.5in

\oddsidemargin 0.0in

\evensidemargin 0.0in

\newcommand{\vp}{\varepsilon}

\theoremstyle{plain}

\newtheorem{thm}{Theorem}

\newtheorem*{unthm}{Theorem}

\newtheorem{lem}{Lemma}
\newtheorem{cor}{Corollary}
\newtheorem*{uncor}{Corollary}

\newtheorem{pro}{Proposition}

\theoremstyle{definition}

\newtheorem{defn}{Definition}

\theoremstyle{remark}

\begin{document}

\title{Approximations for Gibbs states of arbitrary H\"{o}lder potentials on hyperbolic folded sets}

\author{Eugen Mihailescu}

\date{}
\maketitle

\begin{abstract}

In the case of smooth non-invertible maps which are hyperbolic on
folded basic sets $\Lambda$, we give approximations for the Gibbs
states (equilibrium measures) of arbitrary H\"{o}lder potentials,
with the help of weighted sums of atomic measures on preimage sets
of high order. Our endomorphism may have also stable directions on
$\Lambda$, thus it is non-expanding in general. Folding of the phase space means that we do not have a foliation structure for the local unstable manifolds
(they depend on the whole past and may intersect each other both inside and outside $\Lambda$), and moreover the number of
preimages remaining in $\Lambda$ may vary; also Markov partitions
do not always exist on $\Lambda$. Our convergence results apply also to Anosov endomorphisms, in
particular to Anosov maps on
 infranilmanifolds.

\end{abstract}

\textbf{Mathematics Subject Classification 2000:} Primary 37D35,
37D20; Secondary: 37A30, 37C40.

\textbf{Keywords:} Thermodynamic formalism, equilibrium measures,
hyperbolic endomorphisms on folded basic sets, weighted
distributions, prehistories, entropy and pressure.

\section{Introduction}\label{sec1}

Gibbs states (equilibrium measures) of H\"{o}lder potentials for
smooth maps appear naturally in statistical mechanics (for example
\cite{Bo}, \cite{Ru-survey99}, \cite{S}, etc). In order to study
equilibrium measures for smooth diffeomorphisms, one can use the
specification property in order to find plenty of periodic points,
which in turn can be used to approximate equilibrium
 measures. This method was employed successfully by Bowen in \cite{Bo} (see also \cite{KH}).
 Also in \cite{Bo} it is studied the weighted distribution of preimages for one-sided shifts of finite type
 (which are examples of non-invertible maps).

The setting that we work with in this paper is \textit{different}
from both the hyperbolic diffeomorphism case (see \cite{Bo}), as
well as from the expanding case (see \cite{Ru}). We assume that
$f: M \to M$ is a smooth map (endomorphism), not necessarily a
diffeomorphism, and that $\Lambda$ is a basic set for $f$ so that
$f$ is hyperbolic on $\Lambda$, but not necessarily expanding. For
such \textit{non-expanding endomorphisms} there are no Markov
partitions in general, so it is not possible to code the system
using Markov partitions like in the diffeomorphism case. Also the
presence of stable directions on $\Lambda$ makes the local inverse
iterates of small balls to grow exponentially (up to a certain
order).
 For instance in \cite{Bot} it was introduced a family of horseshoes
with self intersections, and it was proved that there are open
sets of parameters which give non-injectivity of the map on the
respective basic set. \ In the non-invertible case, if $f$ is
hyperbolic on $\Lambda$, we do not have a foliation structure for
 the local unstable manifolds; the local unstable manifolds depend now on the whole prehistories (see \cite{Ru1}).
 This folding of the phase space is a major difference from the diffeomorphism
 case, since we are forced to work on $\hat \Lambda$ which is
 not a manifold.
 The local unstable manifolds may intersect each other and through any given point there may pass infinitely
 many local unstable manifolds. Moreover the number of $f$-preimages belonging to $\Lambda$ \textit{may vary} from
 point to point,  so the map is not necessarily constant-to-1; the set $\Lambda$ is \textbf{not} necessarily totally invariant.
 The local unstable manifolds depend Holder on their respective prehistories in the canonical metric on
 $\hat \Lambda$ (\cite{M}). \
The existence of several $n$-preimages in $\Lambda$ for any point
$x \in \Lambda$ means that we can have $n$-preimages $y \in
f^{-n}(x) \cap \Lambda$ of $x$ where the consecutive sum
$S_n\phi(y):= \phi(y) + \ldots + \phi(f^{n-1}y)$ is \textit{well
behaved}, but also other $n$-preimages $z \in f^{-n}(x) \cap
\Lambda$ where $S_n\phi(z)$ is \textit{badly behaved}.

 In the case of smooth non-invertible expanding
maps the situation is difficult, and the problem of finding the
weighted distributions of preimages was solved by Ruelle in
\cite{Ru}; in that situation it was important that the local
inverse iterates contract uniformly on small balls.

In our present non-invertible non-expanding setting, we will
describe the weighted distributions of preimages by studying the
intersections between different tubular neighbourhoods of the
(many) different local unstable manifolds, and by the use of
specification and the expression for equilibrium measure with the
help of periodic points. This will imply also the use of some
finer properties of the lifting of invariant measures to the
natural extension and the comparison between the different types
of behaviors of weighted sums of atomic measures on various
prehistories, with respect to Gibbs states. The \textbf{main
results} of the paper are in Theorem \ref{teorema} and its
Corollaries:

\begin{unthm}
Let $f:M \to M$ be a smooth map (say $\mathcal{C}^2$) on a smooth
Riemannian manifold $M$, so that $f$ is hyperbolic and
finite-to-one on a basic set $\Lambda$; assume also that the
critical set $\mathcal{C}_f$ of $f$ does not intersect $\Lambda$.
Let also $\phi$ a Holder continous potential on $\Lambda$ and
$\mu_\phi$ be the equilibrium measure of $\phi$ on $\Lambda$. Then
$$ \int_\Lambda |<\frac 1n \mathop{\sum}\limits_{y \in f^{-n}(x) \cap \Lambda}
\frac{e^{S_n\phi(y)}}{\mathop{\sum}\limits_{z \in f^{-n}(x) \cap \Lambda}
e^{S_n\phi(z)}} \cdot \mathop{\sum}\limits_{i=0}^{n-1} \delta_{f^i
y} - \mu_\phi, g >|  d\mu_\phi(x)
\mathop{\longrightarrow}\limits_{n \to \infty} 0, \forall g \in
\mathcal{C}(\Lambda, \mathbb R). $$
\end{unthm}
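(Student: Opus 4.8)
\medskip

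The plan is to prove the stronger statement that the scalar $x\mapsto\bigl\langle\,\cdot\,,g\bigr\rangle$ occurring in the integrand converges to $\int_\Lambda g\,d\mu_\phi$ \emph{uniformly} on $\Lambda$, which obviously implies the displayed $L^1(\mu_\phi)$ convergence. For $y\in f^{-n}(x)\cap\Lambda$ let $E_n(y):=\frac1n\sum_{i=0}^{n-1}\delta_{f^iy}$ be the empirical measure of its orbit segment, set $Z_n(x):=\sum_{z\in f^{-n}(x)\cap\Lambda}e^{S_n\phi(z)}$ and $p_n^x(y):=e^{S_n\phi(y)}/Z_n(x)$, so the measure inside the bracket is the convex combination $\sum_{y\in f^{-n}(x)\cap\Lambda}p_n^x(y)\,E_n(y)$. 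The whole matter then reduces to a concentration statement: for every weak$^\ast$ neighbourhood $U$ of $\mu_\phi$ in the space $\mathcal M(\Lambda)$ of Borel probability measures on $\Lambda$,
\[
\sup_{x\in\Lambda}\ \sum_{\substack{y\in f^{-n}(x)\cap\Lambda\\ E_n(y)\notin U}} p_n^x(y)\ \longrightarrow\ 0\qquad(n\to\infty).
\]
Indeed, granting this, fix $\eta>0$ and apply it with $U=\{\nu:|\int g\,d\nu-\int g\,d\mu_\phi|<\eta\}$; since $\sum_y p_n^x(y)=1$ and $|\int g\,dE_n(y)-\int g\,d\mu_\phi|\le 2\|g\|_\infty$ for every $y$, we get $\bigl|\langle\sum_y p_n^x(y)E_n(y)-\mu_\phi,g\rangle\bigr|\le\eta+2\|g\|_\infty\sup_{x}\sum_{E_n(y)\notin U}p_n^x(y)$, whose $\limsup_n$ is $\le\eta$; now let $\eta\downarrow0$. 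This works for all $g\in\mathcal C(\Lambda,\mathbb R)$, with no extra regularity; and, unlike in the expanding case treated in \cite{Ru}, we cannot invoke a Ruelle transfer operator with a spectral gap on $\mathcal C(\Lambda)$ (the inverse branches of $f|_\Lambda$ are hyperbolic, not contracting), so the concentration above will instead come from the variational principle.

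Two preliminary facts feed into it. (i) Since $\mathcal C_f\cap\Lambda=\emptyset$ and $f$ is finite-to-one, $f$ is a local diffeomorphism near $\Lambda$ and, by compactness, there is $\delta_0>0$ with $d(z,z')\ge\delta_0$ for distinct $z,z'\in\Lambda$ having the same $f$-image. Hence if $y\ne y'$ are in $f^{-n}(x)\cap\Lambda$ and $\ell$ is the largest index with $f^\ell y\ne f^\ell y'$, then $f^{\ell+1}y=f^{\ell+1}y'$ forces $d(f^\ell y,f^\ell y')\ge\delta_0$, with $0\le\ell\le n-1$; thus $f^{-n}(x)\cap\Lambda$ is an $(n,\delta_0)$-separated subset of $\Lambda$. (ii) Using specification for $f|_\Lambda$ one joins $(N,\epsilon)$-separated orbit segments of nearly maximal weight to the point $x$ through connecting orbits of bounded length, obtaining distinct points of $f^{-(N+p)}(x)\cap\Lambda$; this gives $Z_{N+p}(x)\ge e^{-Cp}\sum_{z\in E}e^{S_N\phi(z)}$ for any $(N,\epsilon)$-separated $E\subseteq\Lambda$, and, choosing $E$ to realize the pressure, $\liminf_{n}\inf_{x\in\Lambda}\tfrac1n\log Z_n(x)\ge P(\phi)$, i.e. $Z_n(x)\ge e^{\,nP(\phi)-o(n)}$ uniformly in $x$.

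To prove the concentration estimate, fix a weak$^\ast$ neighbourhood $U$ of $\mu_\phi$. Since $S_n\phi(y)=n\int\phi\,dE_n(y)$ we have $e^{S_n\phi(y)}=e^{\,n\int\phi\,dE_n(y)}$, and every weak$^\ast$ cluster point of a sequence $E_n(y_n)$ is $f$-invariant; because $f^{-n}(x)\cap\Lambda$ is $(n,\delta_0)$-separated and $f|_\Lambda$ has zero tail (small-scale) entropy, the standard upper bound for partition sums over separated sets — which at the \emph{fixed} scale $\delta_0$ needs exactly this vanishing tail entropy — yields
\[
\limsup_{n\to\infty}\ \frac1n\log\!\!\sum_{\substack{y\in f^{-n}(x)\cap\Lambda\\ E_n(y)\notin U}}\!\! e^{S_n\phi(y)}\ \le\ \sup\Bigl\{\,h_\nu(f)+\textstyle\int\phi\,d\nu\ :\ \nu\in\mathcal M_f(\Lambda),\ \nu\notin U\,\Bigr\},
\]
uniformly in $x$, where $\mathcal M_f(\Lambda)$ denotes the $f$-invariant members of $\mathcal M(\Lambda)$. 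Now $\mathcal M_f(\Lambda)\setminus U$ is weak$^\ast$ compact, $\nu\mapsto h_\nu(f)+\int\phi\,d\nu$ is upper semicontinuous on it (again by zero tail entropy), and its supremum over all of $\mathcal M_f(\Lambda)$ equals $P(\phi)$ and is attained only at the equilibrium measure $\mu_\phi$; hence the right-hand side is $P(\phi)-c(U)$ for some $c(U)>0$. Dividing by $Z_n(x)\ge e^{\,nP(\phi)-o(n)}$ from (ii) gives $\sup_{x}\sum_{E_n(y)\notin U}p_n^x(y)\le e^{-c(U)n+o(n)}\to0$, which completes the proof, with convergence uniform in $x$.

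The main obstacle is justifying, in this folded non-expanding setting, the large-deviation upper bound used above at the fixed scale $\delta_0$ — equivalently, the vanishing of the tail (topological conditional) entropy of $f|_\Lambda$ and the consequent upper semicontinuity of $\nu\mapsto h_\nu(f)$. Here the absence of Markov partitions, the point-dependent number of preimages in $\Lambda$, and the fact that the local unstable manifolds depend on whole prehistories and may intersect each other mean this is not a routine transcription of the Axiom A case. The way I would handle it is to pass to the natural extension $(\hat\Lambda,\hat f)$ with the canonical metric of \cite{M}, where hyperbolicity makes $\hat f$ expansive so that $h^\ast(\hat f)=0$, and to use that $f|_\Lambda$ is a topological factor of $(\hat\Lambda,\hat f)$, whence $h^\ast(f|_\Lambda)\le h^\ast(\hat f)=0$; this is precisely where the finer properties of the lifting of invariant measures to the natural extension, the analysis of the intersections of the tubular neighbourhoods of the various local unstable manifolds, and the periodic-orbit description of $\mu_\phi$ (cf. \cite{Bo}, \cite{Ru1}, which also underlies fact (ii) and the existence and uniqueness of $\mu_\phi$) are used together with specification. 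Since expansiveness of the natural extension and specification are likewise available for Anosov endomorphisms, the same argument covers that case, in particular Anosov maps on infranilmanifolds.
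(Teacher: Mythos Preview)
Your approach is genuinely different from the paper's, and it is worth comparing the two. The paper does \emph{not} run a large-deviations argument. Instead it applies von Neumann's $L^1$ ergodic theorem to $\hat f^{-1}$ on $\hat\Lambda$ and the function $g\circ\pi$, then uses Lemma~\ref{measure-lift} to push the resulting smallness of $\hat\mu_\phi\{\hat x:|\Sigma_n(g,x_{-n})|\ge\eta\}$ down to a bound on $\mu_\phi\{y:|\Sigma_n(g,y)|\ge\eta\}$. The remainder is a combinatorial estimate: one splits $\Lambda$ into the set $D_n(\beta,\eta)$ where the ``bad'' $n$-preimages carry small relative weight, and its complement; on $D_n$ the integrand is trivially small, while $\mu_\phi(\Lambda\setminus D_n)$ is bounded using Lemma~\ref{inverse-it} (which compares $\mu_\phi$ on the different inverse branches via the periodic-point description of $\mu_\phi$) together with a covering by tubular unstable sets. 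The paper obtains only the $L^1$ statement, never uniform convergence. Your route via the variational principle is cleaner and yields the stronger uniform conclusion --- if it goes through.

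There are, however, two genuine gaps. First, the inequality $h^\ast(f|_\Lambda)\le h^\ast(\hat f)$ is \emph{not} a general property of factor maps; tail entropy can increase under factors. Fortunately the conclusion $h^\ast(f|_\Lambda)=0$ is correct and can be proved directly: the forward-infinite Bowen ball $\{y:d(f^ky,f^kx)<\epsilon\ \forall k\ge0\}$ equals $W^s_\epsilon(x)\cap\Lambda$, and since $f$ uniformly contracts local stable manifolds the Bowen entropy of this set is zero. This salvages the level-2 upper bound at the fixed scale $\delta_0$.

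Second, and more seriously, your specification argument for the lower bound $Z_n(x)\ge e^{nP(\phi)-o(n)}$ is incomplete. Specification produces an orbit that $\epsilon$-\emph{shadows} the concatenation of a given $N$-segment and the point $x$; it does not produce a point whose $(N+p)$-th iterate is \emph{exactly} $x$. What you actually need is that for every $\epsilon>0$ there exists $p$ with $f^{-p}(x)\cap\Lambda$ $\epsilon$-dense in $\Lambda$, uniformly in $x$ --- a ``topological exactness'' statement that does not follow from mixing alone when $\Lambda$ has nontrivial stable direction (in the Anosov case $W^s_\epsilon(x)\cap\Lambda$ is a full disk, so the shadowing orbit lands only in $W^s_\epsilon(x)$, not at $x$). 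Without this uniform lower bound on $Z_n(x)$ your uniform-in-$x$ concentration estimate does not follow, and the argument as written does not close even for the weaker $L^1$ statement. The paper sidesteps this entirely: it never needs a pointwise lower bound on $Z_n(x)$, because Lemma~\ref{inverse-it} compares the $\mu_\phi$-masses of the inverse branches directly, and the ergodic-theorem input controls the total $\mu_\phi$-mass of bad preimages rather than their exponential count.
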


In Corollary \ref{subsequence} we obtain an approximation for the
equilibrium measure $\mu_\phi$, i.e the weak-$\star$ convergence
of a sequence of weighted atomic probabilities of the above type
towards $\mu_\phi$.

\begin{uncor}
In the same setting as in Theorem \ref{teorema}, for any Holder
potential $\phi$ with equilibrium measure $\mu_\phi$, it follows
that there exists a subset $E \subset \Lambda$, with $\mu_\phi(E)
= 1$ and an infinite subsequence $(n_k)_k$ such that for any $z
\in E$ we have the weak-$\star$ convergence of measures: $$\frac
{1}{n_k} \mathop{\sum}\limits_{y \in f^{-n_k}(x)\cap \Lambda}
\frac{e^{S_{n_k}\phi(y)}}{\mathop{\sum}\limits_{z \in f^{-n_k}(x) \cap \Lambda}
e^{S_{n_k}\phi(z)}} \cdot \mathop{\sum}\limits_{i=0}^{n_k-1}
\delta_{f^i y} \mathop{\longrightarrow}\limits_{k \to \infty}
\mu_\phi $$ In particular, if $\mu_0$ is the measure of maximal
entropy, it follows that for $\mu_0$-almost all
 points $x \in \Lambda$,
$
\frac {1}{n_k} \mathop{\sum}\limits_{y \in f^{-n_k} (x)\cap \Lambda}
\frac{\mathop{\sum}\limits_{i = 0}^{n_k-1} \delta_{f^i y}}
{\text{Card}(f^{-n_k} (x)\cap \Lambda)} \mathop{\longrightarrow}\limits_{k \to
\infty} \mu_0,$ for a subsequence $(n_k)_k$.
\end{uncor}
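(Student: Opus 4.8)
The plan is to obtain Corollary \ref{subsequence} directly from Theorem \ref{teorema}, by the standard measure-theoretic passage from $L^1$-convergence to almost-everywhere convergence along a subsequence, carried out simultaneously for a countable dense family of test functions. For $x\in\Lambda$ and $n\ge1$ write
$$\nu_n^x:=\frac1n\sum_{y\in f^{-n}(x)\cap\Lambda}w_y^{(n)}(x)\sum_{i=0}^{n-1}\delta_{f^iy},\qquad w_y^{(n)}(x):=\frac{e^{S_n\phi(y)}}{\sum_{z\in f^{-n}(x)\cap\Lambda}e^{S_n\phi(z)}}.$$
The weights $w_y^{(n)}(x)$ sum to $1$ over $y\in f^{-n}(x)\cap\Lambda$ and each inner sum $\sum_{i=0}^{n-1}\delta_{f^iy}$ has total mass $n$, so every $\nu_n^x$ is a Borel probability measure supported on $\Lambda$. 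In this notation Theorem \ref{teorema} says precisely that for each fixed $g\in\mathcal C(\Lambda,\mathbb R)$ the nonnegative functions $x\mapsto\bigl|\langle\nu_n^x-\mu_\phi,\,g\rangle\bigr|$ tend to $0$ in $L^1(\mu_\phi)$ as $n\to\infty$; in particular $\langle\nu_n^x-\mu_\phi,\,g\rangle\to0$ in $L^1(\mu_\phi)$.

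First I would fix a countable set $\{g_j\}_{j\ge1}$ dense in $\mathcal C(\Lambda,\mathbb R)$ for the sup-norm, which exists because $\Lambda$ is a compact metric space, hence $\mathcal C(\Lambda,\mathbb R)$ is separable. Since $L^1$-convergence implies convergence in measure, from the sequence attached to $g_1$ I extract a subsequence along which $\langle\nu_n^x-\mu_\phi,\,g_1\rangle\to0$ for $\mu_\phi$-a.e.\ $x$; inside it I extract a further subsequence handling $g_2$, and continuing inductively I obtain nested subsequences, one for each index $j$. Let $(n_k)_k$ be the corresponding diagonal subsequence and, for each $j$, let $E_j$ be the $\mu_\phi$-full-measure set on which $\langle\nu_{n_k}^x-\mu_\phi,\,g_j\rangle\to0$. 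Then $E:=\bigcap_{j\ge1}E_j$ still has $\mu_\phi(E)=1$, and $\langle\nu_{n_k}^x,g_j\rangle\to\langle\mu_\phi,g_j\rangle$ for every $x\in E$ and every $j$.

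Next I would pass from this dense family to all of $\mathcal C(\Lambda,\mathbb R)$. Given $x\in E$, $g\in\mathcal C(\Lambda,\mathbb R)$ and $\varepsilon>0$, pick $g_j$ with $\|g-g_j\|_\infty<\varepsilon$; since $\nu_{n_k}^x$ and $\mu_\phi$ are probability measures one has $\bigl|\langle\nu_{n_k}^x-\mu_\phi,\,g\rangle\bigr|\le\bigl|\langle\nu_{n_k}^x-\mu_\phi,\,g_j\rangle\bigr|+2\varepsilon$, and the first term tends to $0$ as $k\to\infty$. Letting $\varepsilon\to0$ gives $\langle\nu_{n_k}^x,g\rangle\to\langle\mu_\phi,g\rangle$ for every $g\in\mathcal C(\Lambda,\mathbb R)$, which is exactly the weak-$\star$ convergence $\nu_{n_k}^x\to\mu_\phi$ for every $x\in E$. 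Finally, the ``in particular'' claim is the special case of a constant potential $\phi\equiv0$: then $S_n\phi\equiv0$, each weight $w_y^{(n)}(x)$ equals $1/\mathrm{Card}\bigl(f^{-n}(x)\cap\Lambda\bigr)$, and the equilibrium measure of the zero potential is by definition the measure of maximal entropy $\mu_0$, so the statement just proved, applied with $\mu_\phi=\mu_0$, yields the displayed convergence for $\mu_0$-a.e.\ $x$ along $(n_k)_k$.

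I do not anticipate a genuine obstacle in this Corollary: all the dynamical substance — controlling the weighted distribution of the (variable number of) $n$-preimages in $\Lambda$ via intersection estimates for tubular neighbourhoods of the many local unstable manifolds, together with specification and the periodic-orbit expression for $\mu_\phi$ — is already encapsulated in Theorem \ref{teorema}. The only points that need care are organizational: the exceptional set $E$ and the subsequence $(n_k)_k$ must be selected once and for all, which forces the diagonalization over a countable dense set, and the upgrade from that dense set to arbitrary $g$ uses that each $\nu_{n_k}^x$ is a probability measure, so that the functionals $g\mapsto\langle\nu_{n_k}^x-\mu_\phi,\,g\rangle$ are uniformly $1$-Lipschitz for the sup-norm.
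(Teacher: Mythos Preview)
Your proposal is correct and follows essentially the same route as the paper's own proof: pass from the $L^1$-convergence of Theorem \ref{teorema} to a.e.\ convergence along a subsequence for each test function, diagonalize over a countable dense family in $\mathcal{C}(\Lambda,\mathbb R)$, intersect the resulting full-measure sets, and upgrade to all continuous $g$ by uniform approximation. Your write-up is in fact slightly more explicit than the paper's (you spell out the $2\varepsilon$ bound via the probability-measure property and you address the ``in particular'' clause by specializing to $\phi\equiv 0$), but the argument is the same.
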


We remark that, since $\mu_\phi$ is positive on any open set (as any open set contains some small Bowen ball and one can apply Proposition 1), there exists a dense set in $\Lambda$ of points $x$ for which we have the above weak convergence of weighted atomic measures generated by $x$, towards $\mu_\phi$. Therefore in a physical non-reversible system, if one knows the past trajectories of such a generic point $x$ up to some high level $n$, then one can approximate the Gibbs state $\mu_\phi$ as above. \ 

One may also compare this result with the usual (forward) SRB measure
in the case of diffeomorphisms (see for example \cite{Bo},
\cite{Ru-survey99}, \cite{S}) or endomorphisms (\cite{L},
\cite{QS}, \cite{QZ}); in fact whenever $\mu_\phi$ is equivalent
to the Lebesgue measure, like in the case of toral endomorphisms
and $\phi \equiv 0$, we obtain an \textit{inverse SRB result}. Our
setting and methods are however different due to the lack of an
inverse function, the fact that unstable manifolds depend on whole
prehistories (not just base points), and also to the fact that the
number of preimages is not necessarily constant on $\Lambda$. \
We will apply the $L^1$ Ergodic Theorem of
von Neumann on the natural extension $\hat \Lambda$ for the lifted
measure, together with combinatorial arguments in order to
estimate the measure $\mu_\phi$ on the intersections between
different tubular unstable sets. Then we will estimate the
equilibrium measure on the different parts of the consecutive
preimage sets in $\Lambda$, by carefully
studying the prehistories from the point of view of 
convergence properties of certain weighted sums of Dirac measures
along them. \ In our Theorem let us notice that we average over
\textbf{all $n$-preimages} of points, so we do not consider only one
prehistory.
 This \textbf{simultaneous} consideration of all $n$-preimages is what makes the proof difficult.
 It cannot be obtained just by applying Birkhoff Ergodic Theorem
to different prehistories, since the speeds of convergence may be different over
 the \textit{uncountable}
collection of prehistories.

Among the \textbf{examples} of smooth endomorphisms on folded
basic sets, let us mention the horseshoes with self-intersections
from \cite{Bot}, the hyperbolic skew products with overlaps in
their fibers from \cite{MU}, or dynamical systems generating from
certain non-reversible statistical physics models (see
\cite{Ru-survey99}). Examples may be obtained also from
non-degenerate holomorphic maps on complex projective spaces (for
instance \cite{M-ETDS02}, \cite{M-Cam}).

Another application of Theorem \ref{teorema} will be in Corollary
\ref{Anosov}, where it will be applied to \textbf{Anosov
endomorphisms}, in order to give the distribution of consecutive
preimage sets, with respect to different equilibrium measures. A
classical example of Anosov endomorphism is given by a toral
endomorphism $f_A:\mathbb T^m \to \mathbb T^m, m \ge 2$, where
$f_A$ is the map induced on the $m$-dimensional torus by a matrix
$A$ with integer coefficients and $\text{det} A \ne 0$. Then any
point in $\mathbb T^m$ has exactly $|\text{det} A|$
$f_A$-preimages in $\mathbb T^m$, as can be seen since the $f_A$
image of the unit square is a parallelogram with area (volume)
equal to $|\text{det} A|$, whose corners have integer coordinates
(\cite{Wa}). If $A$ has all its eigenvalues of absolute values
different from 1, then $f_A$ is a hyperbolic endomorphism and the
above Theorem will apply. Since the equilibrium measure of any
constant function is the Haar measure (\cite{Wa}), we obtain the
asymptotic distribution of the local inverse iterates toward an
\textit{inverse SRB measure} in this case. \ A generalization of
this class of examples is given by smooth \textbf{perturbations of
hyperbolic toral endomorphisms} on $\mathbb T^m, m \ge 2$. They
will be again constant-to-1 and we will be able to apply our main
Theorem, to obtain the weighted distribution of preimages with
respect to equilibrium measures of Holder potentials.

\

\textbf{Remark:} Even on algebraic-type manifolds, like
\textbf{infranilmanifolds}, the situation of Anosov endomorphisms
is very different from that of Anosov diffeomorphisms. Indeed in
the case of infranilmanifolds (\cite{Ma}, \cite{Z}), Franks and
separately Manning showed that any Anosov diffeomorphism can be
"linearized", i.e it is topologically conjugate to some hyperbolic
automorphism. Also, Gromov (\cite{Ma}) showed that if $f$ is an
expanding map on a compact manifold, then $f$ is topologically
conjugate to some expanding endomorphism on some infranilmanifold.
However this is not the case for Anosov endomorphisms.  As was
proved in \cite{Z}, if $M$ is an infranilmanifold then there
exists a $\mathcal{C}^1$ dense subset $\mathcal{U}$ in the set of
"true" Anosov endomorphisms on $M$ (i.e those endomorphisms which
are not Anosov diffeomorphisms nor expanding maps), such that
every $f \in \mathcal{U}$ is not shift equivalent (hence also not
topologically equivalent) to any hyperbolic infranilmanifold
endomorphism. In particular this applies to tori $\mathbb T^m, m
\ge 2$, which are natural examples of infranilmanifolds. For such
Anosov endomorphisms which are neither diffeomorphisms nor
expanding, one cannot apply results similar to the ones from those
two previous cases. However we can apply Theorem \ref{teorema} to
get the distribution of preimage sets with respect to equilibrium
measures.

\section{Distributions of consecutive preimages on basic sets}\label{sec2}

First let us establish some notations. The next definition is
parallel to that of basic set  from \cite{KH} (see also
\cite{M-Cam}).

\begin{defn}\label{basic}
Let $f:M \to M$ a smooth map (say $\mathcal{C}^2$) defined on the
smooth manifold $M$. We will say that a compact $f$-invariant set
$\Lambda$ is a \textbf{basic set} for $f$ if there exists a
 neighbourhood $U$ of $\Lambda$ such that $\Lambda = \mathop{\cap}\limits_{n \in \mathbb Z} f^n(U)$, and
 if $f$ is transitive on $\Lambda$. \ As we work here with non-invertible maps, such sets will also be
called \textbf{folded basic sets}.
\end{defn}

\begin{defn}\label{hat}
The \textbf{natural extension} of the dynamical system $(f, \Lambda)$ is the dynamical system $(\hat f, \hat \Lambda)$,
where $\hat \Lambda:= \{\hat x = (x, x_{-1}, x_{-2}, \ldots), f(x_{-i})= x_{-i+1}, x_0 = x, x_{-i} \in \Lambda,
i \ge 1\}$ and $\hat f (\hat x) := (f(x), x, x_{-1}, \ldots), \hat x \in \hat \Lambda$. It follows that $\hat f$ is a
 homeomorphism on $\hat \Lambda$. An element $\hat x = (x, x_{-1}, \ldots)$ of $\hat \Lambda$, starting with $x$, is
 called a \textbf{prehistory} of $x$ (or \textit{full prehistory} of $x$). The canonical projection $\pi: \hat \Lambda
 \to \Lambda$ is defined by $\pi(\hat x) = x, \hat x \in \hat \Lambda$.
If $f(y) = x, y \in \Lambda$ we call $y$ a \textbf{preimage} of $x$; if $f^n(z) = x, z \in \Lambda$, we call $z$ an
$n$-\textbf{preimage} of $x$ (through $f$). A finite sequence $(x, x_{-1}, \ldots, x_{-n})$ will be called an
 $n$-\textit{prehistory} of $x$.
\end{defn}

Let us mention that if $\mu$ is an $f$-invariant probability measure on $\Lambda$, then
there exists a unique probability $\hat f$-invariant measure $\hat \mu$ on $\hat \Lambda$ such
that $\pi_*(\hat \mu) = \mu$. It can be seen that $\mu$ is ergodic if and only if $\hat \mu$ is
ergodic on $\hat \Lambda$. Also the topological pressure of $\phi$ (denoted by $P_f(\phi)$ to emphasize
dependence on $f$) is equal to the topological pressure of $\phi\circ \pi$, namely $P_{\hat f}(\phi\circ \pi)$;
and $\mu$ is an equilibrium measure for $\phi:\Lambda \to \mathbb R$ if and only if $\hat \mu$ is an equilibrium
measure for $\phi \circ \pi$.

The concept of hyperbolicity on $\Lambda$ makes sense for
non-invertible maps (i.e endomorphisms), but now the unstable
tangent subspaces and the local unstable manifolds depend on whole
prehistories, not only on the base points (for exp. \cite{Ru1}).
In this hyperbolic setting we will denote by $E^s_x, W^s_r(x)$ the
stable tangent subspace, respectively the local stable manifold at
$x$ (for $x \in \Lambda$); and by $E^u_{\hat x}, W^u_r(\hat x)$
the unstable tangent subspace, respectively the local unstable
manifold corresponding to the prehistory $\hat x$ (for $\hat x \in
\hat \Lambda)$. Also by $Df_s(x)$ we shall denote the stable
derivative $Df_x|_{E^s_x}$ and by $Df_u(\hat x)$ the unstable
derivative $Df_x|_{E^u_{\hat x}}$. In \cite{M} we studied the
Holder dependence of local unstable manifolds with respect to the
prehistories and proved a Bowen type formula, giving the unstable
dimension as being the zero $t^u$ of the pressure of the unstable
potential $\Phi^u(\hat x):= -\log |Df_u(\hat x)|, \hat x \in \hat
\Lambda$. We proved also that given a measurable partition of
$\hat \Lambda$ subordinated to the unstable manifolds, the
equilibrium measure of $\Phi^u$ has conditional measures which are
geometric of exponent $t^u$.

If $f:M \to M$ is a smooth map which is hyperbolic on a basic set
$\Lambda$, let $\hat x = (x, x_{-1}, \ldots, x_{-n}, \ldots) \in
\hat \Lambda, n \ge 1, \vp >0$ small. Then we call an $(n,
\vp)$-\textbf{tubular unstable neighbourhood} (or tubular unstable
set) the set $$T_n(\hat x, \vp):= \{ y \in \Lambda, \exists \
\text{an} \ n-\text{preimage} \ y_{-n} \in \Lambda \ \text{of} \
y, \text{s.t} \ d(f^i y_{-n}, f^i x_{-n}) < \vp, i = 0, \ldots,
n\}$$ The notion of tubular unstable set can be extended to those
$y \in M$ which have an $n$-preimage $y_{-n} \in M$ with the above
property, but since we will work in this paper only with measures
supported on $\Lambda$, we preferred to give the definition
restricted to $\Lambda$. It is important to keep in mind that
tubular unstable sets corresponding to two different prehistories
of the same point $x \in \Lambda$ \textit{may not be the same};
still they intersect in a set containing $x$.

It is well-known that any $f$-invariant measure $\mu$ on $\Lambda$ can be lifted to a unique $\hat f$-invariant
measure $\hat \mu$ on $\hat \Lambda$ such that $\pi_*(\hat \mu) = \mu$. It will be important to see exactly how to
 calculate the measure $\hat \mu$ of an arbitrary closed set from $\hat \Lambda$, in terms of the $\mu$-measures of
 sets in $\Lambda$.

\begin{lem}\label{measure-lift}

Let $f: \Lambda \to \Lambda$ be a continuous map on a compact metric space $\Lambda$, and $\mu$ an $f$-invariant
borelian probability measure on $\Lambda$. Let $\hat \mu$ be the unique $\hat f$-invariant probability measure on
$\hat \Lambda$ with the property that $\pi_*(\hat \mu) = \mu$. Then for an arbitrary closed set
$\hat E \subset \hat \Lambda$, we have that
$$
\hat \mu(\hat E) = \mathop{\lim}\limits_n \mu(\{x_{-n}, \exists \hat x = (x, \ldots, x_{-n}, \ldots) \in \hat E \})
$$
\end{lem}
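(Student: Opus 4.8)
The plan is to realise $\hat E$ as a decreasing intersection of cylinder sets whose $\hat\mu$-measures are exactly the numbers $\mu\big(\{x_{-n}:\exists\,\hat x\in\hat E\}\big)$, and then to invoke continuity of measure from above. First I would fix notation: write $\pi_n:\hat\Lambda\to\Lambda$, $\pi_n(\hat x)=x_{-n}$, for the projection onto the $n$-th past coordinate, so that $\pi_0=\pi$ and $\pi_n=\pi\circ\hat f^{\,-n}$. Since $\hat\mu$ is $\hat f$-invariant with $\pi_*\hat\mu=\mu$, this gives $(\pi_n)_*\hat\mu=\mu$ for every $n\ge 0$. I would then set $A_n:=\pi_n(\hat E)=\{x_{-n}:\exists\,\hat x=(x,\ldots,x_{-n},\ldots)\in\hat E\}$; as $\hat E$ is closed in the compact space $\hat\Lambda$ it is compact, so each $A_n$ is compact, hence closed and $\mu$-measurable, and the right-hand side of the asserted identity is $\lim_n\mu(A_n)$ (once we know the limit exists).

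The key step is to identify $\pi_n^{-1}(A_n)$ with a genuine cylinder over $\hat E$. For any $\hat y\in\hat\Lambda$ and $0\le j\le n$ one has $y_{-j}=f^{\,n-j}(y_{-n})$, directly from the relations $f(y_{-i})=y_{-i+1}$ defining $\hat\Lambda$; consequently, if $y_{-n}\in A_n$, say $y_{-n}=x_{-n}$ for some $\hat x\in\hat E$, then automatically $y_{-j}=f^{\,n-j}(x_{-n})=x_{-j}$ for all $j=0,\dots,n$. Hence
$$\pi_n^{-1}(A_n)=C_n(\hat E):=\{\hat y\in\hat\Lambda:\exists\,\hat x\in\hat E\text{ with }y_{-j}=x_{-j},\ j=0,\dots,n\},$$
the set of prehistories whose first $n+1$ coordinates match those of some element of $\hat E$. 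These sets are closed and clearly decreasing, $C_{n+1}(\hat E)\subset C_n(\hat E)$, and I claim $\bigcap_n C_n(\hat E)=\hat E$. The inclusion $\supset$ is immediate (each $\hat y\in\hat E$ witnesses its own membership in every $C_n(\hat E)$); for $\subset$, given $\hat y$ in the intersection I would pick, for each $n$, an element $\hat x^{(n)}\in\hat E$ agreeing with $\hat y$ on coordinates $0,\dots,-n$, observe that $\hat x^{(n)}\to\hat y$ coordinatewise, i.e. in the canonical metric on $\hat\Lambda$, and conclude $\hat y\in\hat E$ because $\hat E$ is closed.

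To finish, since $\hat\mu$ is a finite measure and $\big(C_n(\hat E)\big)_n$ is a decreasing sequence of Borel (indeed closed) sets with $\bigcap_n C_n(\hat E)=\hat E$, continuity from above yields $\hat\mu(\hat E)=\lim_n\hat\mu\big(C_n(\hat E)\big)=\lim_n\hat\mu\big(\pi_n^{-1}(A_n)\big)=\lim_n\mu(A_n)$, using $(\pi_n)_*\hat\mu=\mu$; this is exactly the formula in the statement, and in particular the limit exists. The one delicate point — the main obstacle — is the identity $\bigcap_n C_n(\hat E)=\hat E$: it rests essentially on the closedness of $\hat E$ together with the fact that cylinders depending on finitely many coordinates form a neighbourhood basis for the inverse-limit topology on $\hat\Lambda$; everything else is bookkeeping. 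Note too that the monotonicity $\mu(A_{n+1})\le\mu(A_n)$, far from obvious directly from the definition of the $A_n$ as projections, drops out for free from $C_{n+1}(\hat E)\subset C_n(\hat E)$ and $(\pi_n)_*\hat\mu=\mu$.
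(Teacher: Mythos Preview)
Your proof is correct and follows essentially the same approach as the paper. In the paper's notation one sets $\hat E_n:=\hat f^{-n}\hat E$ and $\hat F_n:=\pi^{-1}(\pi(\hat E_n))$ and shows $\hat E=\bigcap_n \hat f^n(\hat F_n)$ is a decreasing intersection; your sets $C_n(\hat E)=\pi_n^{-1}(A_n)$ coincide exactly with $\hat f^n(\hat F_n)$ (since $\pi_n=\pi\circ\hat f^{-n}$), so the two arguments are the same up to notation, with your formulation via the coordinate projections $\pi_n$ being perhaps a bit more transparent.
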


\begin{proof}

The arbitrary closed set $\hat E$ is not necessarily of the form
$\pi^{-1}(E)$ for some $E$ borelian set in $\Lambda$. Let us
denote $\hat E_n:= \hat f^{-n}\hat E, n \ge 1$; then $\hat
\mu(\hat E_n) = \hat \mu(\hat E)$ since $\hat \mu$ is $\hat
f$-invariant. Let also $\hat F_n:= \pi^{-1}(\pi(\hat E_n)), n \ge
1$. We will prove that $$ \hat E = \mathop{\cap}\limits_{n\ge
0}\hat f^n(\hat F_n) $$ We have clearly $\hat E \subset \hat
f^n(\hat F_n), n \ge 0$. Let now a prehistory $\hat z \in
\mathop{\cap}\limits_{n \ge 0} \hat f^n\hat F_n$; then if $\hat z
= (z, z_{-1}, \ldots, z_{-n}, \ldots)$, we obtain that $z_{-n} \in
\pi\hat E_n, \forall n \ge 0$, hence $\hat z \in \hat E$ since
$\hat E$ is assumed closed. Thus we showed the equality $\hat E =
\mathop{\cap}\limits_{n\ge 0}\hat f^n(\hat F_n)$. Now let us
notice that the above intersection is decreasing, since $\hat
f^{n+1}\hat F_{n+1} \subset \hat f^n\hat F_n, n \ge 0$, since for
a prehistory from $\hat f^{n+1} \hat F_{n+1}$ the $(n+1)$-th entry
is from $\pi\hat E_{n+1}$ and the $n$-th entry is in $\pi \hat
E_n$, whereas the $(n+1)$-entry of a prehistory from $\hat f^n
\hat F_n$ can be \textbf{any} preimage of a point from $\pi\hat
E_n$. Since the above intersection is decreasing, we get
$$\hat\mu(\hat E) = \mathop{\lim}\limits_n\hat \mu(\hat f^n\hat
F_n) = \mathop{\lim}\limits_n\hat \mu(\hat F_n) =
\mathop{\lim}\limits_n \hat \mu(\pi^{-1}(\pi(\hat E_n))) =
\mathop{\lim}\limits_n \mu(\pi(\hat E_n)) = \mathop{\lim}\limits_n
\mu(\pi\circ \hat f^{-n} \hat E)$$  We used that $\pi_*\hat\mu =
\mu$ and that $\hat \mu$ is $\hat f$-invariant on $\hat \Lambda$.
\   Therefore we obtain that $\hat \mu(\hat E) =
\mathop{\lim}\limits_n \mu(\{x_{-n}, \exists \hat x \in \hat E,
\hat x = (x, \ldots, x_{-n}, \ldots)\})$.

\end{proof}

For a basic set $\Lambda$ for a smooth map $f$ we will denote by
$f_\Lambda^{-1}x, x \in \Lambda$ the set of $f$-preimages of $x$
\textbf{which belong to} $\Lambda$. Similarly $f_\Lambda^{-n}x$
will denote the $n$-preimages of $x$ belonging to $\Lambda$, i.e
$f_\Lambda^{-n}x := f^{-n}x \cap \Lambda$. In general in this
paper we will be interested only in the preimages belonging to
$\Lambda$. We will denote also by $S_n\phi(y)$ for a point $y \in
\Lambda$, the \textbf{consecutive sum} $S_n\phi(y):= \phi(y) +
\ldots + \phi(f^{n-1}y), n \ge 1$. Define then for $n \ge 1$ and
$x \in \Lambda$, the probability measure:

\begin{equation}\label{mu-n}
\mu_n^x:=  \frac 1n \mathop{\sum}\limits_{y \in f_\Lambda^{-n}x}
\frac{e^{S_n\phi(y)}}{\mathop{\sum}\limits_{z \in f_\Lambda^{-n}x}
e^{S_n\phi(z)}} \cdot \mathop{\sum}\limits_{i=0}^{n-1} \delta_{f^i
y}
\end{equation}

The $\mu_n^x, n \ge 1$ are probability measures and thus from the
weak compactness of the unit ball in the space of measures, we
obtain that any sequence of such measures (for $x\in \Lambda$
given) contains a convergent subsequence; the limit of such a
sequence $(\mu_{n_k}^x)_{k \ge 1}$ is then an $f$-invariant
probability measure on $\Lambda$. We will show that such limit
measures have in fact an important thermodynamical property,
namely they are equilibrium measures (i.e maximize in the
Variational Principle, \cite{Wa}).

We shall use in the sequel \textbf{equilibrium measures for
endomorphisms}; the existence of these measures for non-invertible
maps, and estimates on Bowen balls for these measures are similar
to the corresponding properties for diffeomorphisms (see the proof
of Proposition \ref{eq-endo} below). We denote the Bowen ball
$B_n(y, \vp):=\{z \in \Lambda, d(f^i y, f^i z) < \vp, i = 0,
\ldots, n-1\}$, for $y \in \Lambda, n \ge 1, \vp >0$.

\begin{pro}\label{eq-endo}
Let $\Lambda$ be a hyperbolic basic set for a smooth endomorphism $f:M \to M$, and $\phi$ be a Holder continuous
function on $\Lambda$. Then there exists a unique equilibrium measure $\mu_\phi$ for $\phi$ on $\Lambda$ which has
the following properties:

a) for any $\vp >0$ there exist positive constants $A_\vp, B_\vp$
and an integer $n_0\ge 1$ such that for any $y \in \Lambda, n \ge
n_0$, $$ A_\vp e^{S_n \phi(y) - nP(\phi)}  \le \mu_\phi(B_n(y,
\vp)) \le  B_\vp e^{S_n\phi(y)-n P(\phi)} $$

b) By working eventually with a finite iteration of $f$, we have
$$\mu_\phi = \mathop{\lim}\limits_{n \to \infty}
\frac{1}{P_\Lambda(f, \phi, n)} \mathop{\sum}\limits_{x \in
\text{Fix}(f^n)\cap \Lambda} e^{S_n\phi(x)} \delta_x,$$ where
$P_\Lambda(f, \phi, n) := \mathop{\sum}\limits_{x \in
\text{Fix}(f^n)\cap \Lambda} e^{S_n\phi(x)}, n \ge 1$.

\end{pro}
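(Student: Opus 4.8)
The plan is to reduce everything to the natural extension $\hat\Lambda$, where $\hat f$ is a homeomorphism and $\Lambda$ becomes, through the unstable manifolds, a hyperbolic set in the classical Smale sense. First I would recall (from the remarks preceding the proposition) that $P_f(\phi)=P_{\hat f}(\phi\circ\pi)$ and that $\mu$ is an equilibrium measure for $\phi$ on $\Lambda$ if and only if its lift $\hat\mu$ is an equilibrium measure for $\phi\circ\pi$ on $\hat\Lambda$; since the lift $\mu\mapsto\hat\mu$ is a bijection between $f$-invariant and $\hat f$-invariant measures preserving entropy and integrals, existence and uniqueness of $\mu_\phi$ follows once we know $\phi\circ\pi$ has a unique equilibrium state on $\hat\Lambda$. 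For the latter I would invoke the standard theory for hyperbolic sets of homeomorphisms: because $\mathcal C_f\cap\Lambda=\emptyset$ and $f$ is hyperbolic and finite-to-one on $\Lambda$, the system $(\hat f,\hat\Lambda)$ is expansive and has specification (transitivity of $f$ lifts to transitivity of $\hat f$), so by Bowen's theorem every continuous potential with the Bowen property — in particular every Hölder $\phi\circ\pi$, using that $\pi$ is Hölder in the canonical metric on $\hat\Lambda$ as established in \cite{M} — has a unique equilibrium measure, which is moreover the Gibbs measure; pushing forward by $\pi$ gives $\mu_\phi$.

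For part a) I would first prove the Gibbs bounds upstairs. Expansiveness plus specification for $(\hat f,\hat\Lambda)$ give, by the classical argument, constants $\hat A_\vp,\hat B_\vp>0$ and $n_0$ with
$$\hat A_\vp\, e^{S_n(\phi\circ\pi)(\hat y)-nP(\phi)}\ \le\ \hat\mu\big(\hat B_n(\hat y,\vp)\big)\ \le\ \hat B_\vp\, e^{S_n(\phi\circ\pi)(\hat y)-nP(\phi)}$$
for all $\hat y\in\hat\Lambda$, $n\ge n_0$, where $\hat B_n$ is the Bowen ball in $\hat\Lambda$. Then I would transfer this to $\Lambda$: since $\mathcal C_f\cap\Lambda=\emptyset$, $f$ is a local diffeomorphism near $\Lambda$, so for $\vp$ small the projection $\pi$ sends $\hat B_n(\hat y,\vp)$ onto $B_n(y,\vp)$ where $y=\pi(\hat y)$ (forward Bowen balls only see the base point), and conversely $\pi^{-1}(B_n(y,\vp))$ is covered by a bounded number of such $\hat B_n$'s — the bound depending only on the finite-to-one constant and not on $n$, because once one follows a prehistory past level $n$ the hyperbolic contraction in the stable direction forces the relevant preimages to stay $\vp$-close, so the ambiguity is controlled uniformly. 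Since $S_n(\phi\circ\pi)(\hat y)=S_n\phi(y)$, combining $\pi_*\hat\mu=\mu_\phi$ with these two inclusions yields the asserted two-sided estimate with adjusted constants $A_\vp,B_\vp$.

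For part b) I would run Bowen's periodic-point approximation in $\hat\Lambda$ and then project. Expansiveness and specification give, by the standard counting argument, that $\frac{1}{\hat P(\hat f,\phi\circ\pi,n)}\sum_{\hat x\in\mathrm{Fix}(\hat f^n)}e^{S_n(\phi\circ\pi)(\hat x)}\delta_{\hat x}\to\hat\mu$ weak-$\star$. The one subtlety is the correspondence between $\mathrm{Fix}(\hat f^n)$ and $\mathrm{Fix}(f^n)\cap\Lambda$: a point $\hat x$ fixed by $\hat f^n$ has $x=\pi(\hat x)$ fixed by $f^n$, and conversely a point $x\in\mathrm{Fix}(f^n)\cap\Lambda$ may a priori lift to several periodic prehistories (its periodic preimages under $f^{-n}$), but by expansiveness — after replacing $f$ by a fixed finite iterate, which is the hypothesis "working eventually with a finite iteration of $f$" — each such $x$ has exactly one $\hat f^n$-periodic prehistory, namely the one obtained by periodically repeating $x, f^{-n+1}x,\dots$ inside the unique relevant branch; hence $\pi$ restricts to a bijection $\mathrm{Fix}(\hat f^n)\to\mathrm{Fix}(f^n)\cap\Lambda$ preserving $S_n\phi$, so $\hat P(\hat f,\phi\circ\pi,n)=P_\Lambda(f,\phi,n)$ and $\pi_*$ of the periodic-orbit measure upstairs is exactly the one in the statement. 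Pushing the convergence forward by the continuous map $\pi$ gives $\mu_\phi=\lim_n \frac{1}{P_\Lambda(f,\phi,n)}\sum_{x\in\mathrm{Fix}(f^n)\cap\Lambda}e^{S_n\phi(x)}\delta_x$. The main obstacle, and the place requiring genuine care rather than citation, is exactly this bookkeeping of how finite-to-one non-invertibility and the stable contraction interact when one lifts Bowen balls and periodic points to $\hat\Lambda$ — i.e. proving the "bounded multiplicity, uniform in $n$" claims that make the classical diffeomorphism arguments go through verbatim on $\hat\Lambda$.
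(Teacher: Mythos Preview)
Your overall strategy --- pass to the natural extension, invoke the classical expansive\,+\,specification theory for $\hat f$, and transfer back via $\pi$ --- is exactly the paper's. Two places, however, are over-engineered and rest on the wrong justifications.

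For the upper bound in a), the paper does not cover $\pi^{-1}(B_n(y,\vp))$ by several Bowen balls upstairs. Instead it uses the $\hat f$-invariance of $\hat\mu_\phi$: there is a fixed $k=k(\vp)$ (depending only on the weighting in the canonical metric on $\hat\Lambda$, \emph{not} on the finite-to-one constant or on stable contraction) such that $\hat f^{k}\big(\pi^{-1}B_n(y,\vp)\big)\subset \hat B_{n-k}(\hat f^{k}\hat y,2\vp)$, and then a single Gibbs estimate upstairs gives the bound. Your covering claim happens to be true, but the reason is simply compactness of the prehistory fibers together with the exponential weights in the $\hat\Lambda$-metric; the ``finite-to-one constant'' and ``stable contraction past level $n$'' you cite play no role.

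For b), your concern about a periodic $x$ lifting to several $\hat f^n$-periodic prehistories is misplaced, and the fix you propose misreads the hypothesis. If $\hat x=(x,x_{-1},x_{-2},\ldots)\in\text{Fix}(\hat f^{\,n})$ then comparing coordinates in $\hat f^{\,n}\hat x=\hat x$ forces $x_{-1}=f^{n-1}x$, $x_{-2}=f^{n-2}x$, and so on; hence the bijection $\text{Fix}(\hat f^{\,n})\leftrightarrow\text{Fix}(f^n)\cap\Lambda$ is automatic from the definition of $\hat f$, with no need for expansiveness or for passing to an iterate. The paper's phrase ``working eventually with a finite iteration of $f$'' is there only to upgrade transitivity to topological mixing, which is what is actually needed to guarantee specification.
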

\begin{proof}

 a) We work in the natural extension $\hat \Lambda$ with the expansive homeomorphism
 $\hat f: \hat \Lambda \to \hat\Lambda$. The existence of a unique equilibrium measure for the Holder
 potential $\phi\circ \pi$ with respect to the expansive  homeomorphism $\hat f:\hat \Lambda \to \hat \Lambda$
 follows from the standard theory for homeomorphisms on compact metric spaces (see for example \cite{Bo}, \cite{KH});
 let us denote this equilibrium measure by $\hat \mu_\phi$. Then, given an $\hat f$-invariant
 probability measure $\hat \mu$ on $\hat \Lambda$, there exists a unique $f$-invariant measure $\mu$ on $\Lambda$ such
 that $\pi_*\hat\mu = \mu$. If we take the measure $\hat \mu_\phi$ instead of $\hat \mu$, we will obtain a measure
 $\mu_\phi$. It is easy to show that $h_{\hat \mu}(\hat f) = h_\mu(f)$ and that $P_{\hat f}(\phi\circ \pi) = P_f(\phi),
 \forall \phi \in \mathcal{C}(\Lambda, \mathbb R)$. Thus it follows that $\mu$ is an equilibrium measure
 for $\phi$ if and only if its unique $\hat f$-invariant lifting $\hat \mu$ is an equilibrium measure for $\phi \circ \pi$
 on $\hat \Lambda$. Thus $\mu_\phi:= \pi_* \hat \mu_\phi$ is the unique equilibrium measure for $\phi$ on
$\Lambda$.
Now we see that there exists a $k=k(\vp) \ge 1$ such that
 $\hat f^k(\pi^{-1}B_n(y, \vp)) \subset B_{n-k}(\hat f^k\hat y, 2\vp) \subset \hat \Lambda$, for any $y \in \Lambda$.
 On the other hand for any $\hat y \in \hat \Lambda$, we have $\pi(B_n(\hat y, \vp)) \subset B_n(y, \vp)$.
 These two inclusions and the $\hat f$-invariance of $\hat \mu_\phi$, together with the estimates for the
 $\hat \mu_\phi$-measure of the Bowen balls in $\hat \Lambda$ (from \cite{KH}) imply that there exist positive
 constants $A_\vp, B_\vp$ (depending on $\vp>0$ and on $\phi$) such
 that:
$$ A_\vp e^{S_n\phi(y)-nP(\phi)} \le \mu_\phi(B_n(y, \vp)) \le
B_\vp e^{S_n(\phi)(y)-nP(\phi)}, \forall y \in \Lambda, n \ge 1 $$
So the estimates for Bowen balls are true also for endomorphisms.

 b) The iterate of $f$ may be needed in order to have topological mixing (needed to guarantee specification, \cite{KH}). However without loss of generality we may assume that $f$ is topologically mixing on $\Lambda$.
If $x$ is a periodic point for $f|_\Lambda$, say $f^m(x) = x$, then we obtain a periodic point for $\hat f$, namely the prehistory $\hat x = (x, f^{m-1}(x), \ldots, f(x), x, \ldots, f(x), x, \ldots) \in \hat \Lambda$. Conversely, if $\hat x$ is a periodic point for $\hat f$, then $x$ is a periodic point (of the same period) for $f$. Similarly as for diffeomorphisms we prove that if $f$ is hyperbolic then $f$ satisfies specification. Then specification is used to show the convergence of the weighted sums of Dirac measures concentrated at periodic points towards $\mu_\phi$, in the same way as in \cite{KH}.

\end{proof}

Let us prove now a Lemma giving a relationship between the
measures of different parts of the preimage of some set of
positive measure.

\begin{lem}\label{inverse-it}
In the above setting, let a Holder potential $\phi:\Lambda \to
\mathbb R$ and its unique equilibrium measure $\mu_\phi$. Let us
consider $\vp >0$, $k$ disjoint Bowen balls $B_m(y_1, \vp), \ldots,
B_m(y_k, \vp)$ and a borelian set $A \subset f^m B_m(y_1, \vp)
\cap \ldots \cap f^m B_m(y_k, \vp)$ such that $\mu_\phi(A) > 0$;
denote by $A_1:=f^{-m}A \cap B_m(y_1, \vp), \ldots, A_k:=
f^{-m}A\cap B_m(y_k, \vp)$. Then there exists a positive constant
$C_\vp$ independent of $m, y_1, \ldots, y_m$ such that $$ \frac {1}{C_\vp}
\mu_\phi(A_j) \cdot \frac{e^{S_m \phi(y_i)}}{e^{S_m\phi(y_j)}} \le
\mu_\phi(A_i) \le C_\vp \mu_\phi(A_j) \cdot
\frac{e^{S_m\phi(y_i)}}{e^{S_m\phi(y_j)}}, i, j = 1, \ldots, m $$

\end{lem}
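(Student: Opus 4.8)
The plan is to reduce everything to the two-sided estimate on Bowen balls furnished by Proposition \ref{eq-endo}(a), after covering the set $A$ by small Bowen balls of a fixed order. First I would fix $\delta>0$ much smaller than $\vp$ (say $\delta = \vp/10$, to be adjusted), and pick $n_0$ as in Proposition \ref{eq-endo}(a) for the scale $\delta$. For each large $n\ge n_0$ choose a maximal $(n,\delta)$-separated subset $\{z_1,\dots,z_{N}\}$ of $A$; then the Bowen balls $B_n(z_\ell,\delta)$ cover $A$ while the balls $B_n(z_\ell,\delta/2)$ are pairwise disjoint, so summing the lower and upper bounds of Proposition \ref{eq-endo}(a) gives
\begin{equation}\label{eq-cover}
A_\delta\,e^{-nP(\phi)}\sum_\ell e^{S_n\phi(z_\ell)}\ \le\ \mu_\phi(A)\,(1+o(1))\ \le\ B_\delta\,e^{-nP(\phi)}\sum_\ell e^{S_n\phi(z_\ell)},
\end{equation}
so the quantity $Z_n := e^{-nP(\phi)}\sum_\ell e^{S_n\phi(z_\ell)}$ is comparable to $\mu_\phi(A)$ up to a factor depending only on $\delta$ (hence only on $\vp$).

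Next I would transport this covering back through $f^m$ into each Bowen ball $B_m(y_i,\vp)$. Since $z_\ell\in A\subset f^mB_m(y_i,\vp)$, there is a unique point $w_{i,\ell}\in B_m(y_i,\vp)$ with $f^m(w_{i,\ell})=z_\ell$ (uniqueness because $f$ is hyperbolic with no critical points on $\Lambda$ and $\vp$ is small, so $f^m$ is injective on $B_m(y_i,\vp)$); moreover $d(f^jw_{i,\ell},f^jy_i)<\vp$ for $j=0,\dots,m-1$, whence $S_m\phi(w_{i,\ell}) = S_m\phi(y_i) + O(1)$ with the $O(1)$ bounded by the Hölder variation of $\phi$ on a $\vp$-ball, a constant $D_\vp$. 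Now $B_{m+n}(w_{i,\ell},\delta')\subset A_i$ for a suitable $\delta'\asymp\delta$ once $\vp$ is small enough that local inverse branches contract appropriately on the relevant scales, and these are pairwise disjoint over $\ell$; applying the lower bound of Proposition \ref{eq-endo}(a) at level $m+n$ to these balls gives
\begin{equation}\label{eq-lower-Ai}
\mu_\phi(A_i)\ \ge\ A_{\delta'}\,e^{-(m+n)P(\phi)}\sum_\ell e^{S_{m+n}\phi(w_{i,\ell})}\ \ge\ A_{\delta'}e^{-D_\vp}\,e^{S_m\phi(y_i)-mP(\phi)}\,Z_n ,
\end{equation}
using the cocycle identity $S_{m+n}\phi(w_{i,\ell}) = S_m\phi(w_{i,\ell}) + S_n\phi(z_\ell)$. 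For the matching upper bound on $\mu_\phi(A_i)$ I would instead cover $A_i$ itself: the images under $f^m$ of an $(m+n,\delta)$-spanning set of $A_i$ form an $(n,\delta)$-spanning set of a subset of $A$, so the same Proposition \ref{eq-endo}(a) estimates give $\mu_\phi(A_i)\le C'_\vp\,e^{S_m\phi(y_i)-mP(\phi)}\,Z_n$. Combining the two-sided bounds for $\mu_\phi(A_i)$ and $\mu_\phi(A_j)$ and cancelling the common factor $e^{-mP(\phi)}Z_n$ yields
$$
\frac{1}{C_\vp}\,\mu_\phi(A_j)\,\frac{e^{S_m\phi(y_i)}}{e^{S_m\phi(y_j)}}\ \le\ \mu_\phi(A_i)\ \le\ C_\vp\,\mu_\phi(A_j)\,\frac{e^{S_m\phi(y_i)}}{e^{S_m\phi(y_j)}},
$$
with $C_\vp$ depending only on $\vp$ through $A_\delta,B_\delta,A_{\delta'},B_{\delta'},D_\vp$ — crucially \emph{not} on $m$ or on the points $y_1,\dots,y_k$, since the constants in Proposition \ref{eq-endo}(a) are uniform in the base point and the order.

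The main obstacle I anticipate is the geometric bookkeeping in the back-transport step: one must ensure that pulling an $(n,\delta)$-Bowen ball around $z_\ell$ back by the branch of $f^{-m}$ lying in $B_m(y_i,\vp)$ produces an honest $(m+n,\delta')$-Bowen ball around $w_{i,\ell}$ contained in $A_i$, with $\delta'$ controlled independently of $m$. This is where hyperbolicity (no foliation needed, just the local inverse branches and the absence of critical points on $\Lambda$, so that $f^m$ is a diffeomorphism from $B_m(y_i,\vp)$ onto its image for $\vp$ small) and the uniform distortion estimate for $\phi$ are used; the stable directions cause the inverse branches of balls to expand up to order $m$, but since we work with Bowen dynamical balls rather than metric balls this expansion is exactly what is built into the definition, so no exponential loss occurs. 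Everything else is a routine application of the Bowen-ball mass estimate together with the additivity $S_{m+n}\phi = S_m\phi + S_n\phi\circ f^m$.
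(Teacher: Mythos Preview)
Your approach is genuinely different from the paper's: you try to derive the lemma directly from the Gibbs bounds of Proposition~\ref{eq-endo}(a) via a Bowen--ball covering of $A$, whereas the paper uses the periodic--point approximation of Proposition~\ref{eq-endo}(b) together with specification. Your upper bound $\mu_\phi(A_i)\le C\,e^{S_m\phi(y_i)-mP(\phi)}Z_n$ is fine (cover $A$ by $B_n(z_\ell,\delta)$, pull back, and use $A_i\subset\bigcup_\ell B_{m+n}(w_{i,\ell},2\vp)$), but the lower bound step contains a real gap. You assert $B_{m+n}(w_{i,\ell},\delta')\subset A_i$; unwinding the definitions this forces $f^m\bigl(B_{m+n}(w_{i,\ell},\delta')\bigr)\subset A$, hence (up to the branch) $B_n(z_\ell,\delta')\cap\Lambda\subset A$. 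But $A$ is an \emph{arbitrary} Borel set of positive measure and need not contain any forward Bowen ball. The underlying obstruction is exactly the presence of stable directions: in the non-expanding hyperbolic setting the balls $B_n(z,\delta)$ do \emph{not} shrink to points as $n\to\infty$, they converge to pieces of local stable sets $W^s_\delta(z)\cap\Lambda$. So no choice of large $n$ lets you approximate a general $A$ from the inside by forward Bowen balls, and the Gibbs estimate alone does not yield $\mu_\phi(A_i)\ge c\,e^{S_m\phi(y_i)-mP(\phi)}Z_n$. (In the expanding case your argument would indeed go through, which is why it looks plausible.)

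The paper sidesteps this as follows. It first reduces to $\mu_\phi(\partial A_i)=\mu_\phi(\partial A_j)=0$ and then compares the approximating measures $\tilde\mu_n=P(f,\phi,n)^{-1}\sum_{x\in\mathrm{Fix}(f^n)}e^{S_n\phi(x)}\delta_x$ on $A_i$ and on an arbitrary neighbourhood $V\supset A_j$. For each $x\in\mathrm{Fix}(f^n)\cap A_i$ one takes its partner $y\in A_j$ with $f^m y=f^m x$ and uses specification to manufacture a point $z\in\mathrm{Fix}(f^n)\cap V$ that $\vp$-shadows the $(n-M_\vp)$-orbit of $y$; a counting argument (splitting $B_{n-M_\vp}(x,2\vp)$ into at most $N_\vp$ balls $B_n(\cdot,2\vp)$, each containing a single $n$-periodic point) shows this correspondence $x\mapsto z$ is at most $N_\vp$-to-one, and bounded distortion gives $|S_n\phi(x)-S_n\phi(z)|\le |S_m\phi(y_i)-S_m\phi(y_j)|+\tilde C_\vp$. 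Hence $\tilde\mu_n(A_i)\le C_\vp\,\tilde\mu_n(V)\,e^{S_m\phi(y_i)-S_m\phi(y_j)}$; letting $n\to\infty$ (using the zero-boundary assumption) and then $V\downarrow A_j$ yields the inequality. The specification step is precisely what replaces your missing lower bound: it produces, for each weighted point counted in $A_i$, a comparably weighted periodic point \emph{inside} any prescribed neighbourhood of $A_j$, without ever needing a Bowen ball to sit inside $A$.
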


\begin{proof}

We shall denote the equilibrium measure $\mu_\phi$ by $\mu$ to
simplify notation, and will work with the restriction of $f$ to
$\Lambda$ (without saying this explicitly every time).

Similarly as in \cite{KH} or in \cite{Wa}, since the borelian sets
with boundaries of measure zero form a sufficient collection, we
may assume that each of the sets $A_i, A_j$ have boundaries of
$\mu$-measure zero. \  From construction $f^m(A_i) = f^m(A_j), i,
j = 1, \ldots, k$. But $\mu$ can be considered as the limit of the
sequence of measures $$\tilde \mu_n:= \frac {1}{P(f, \phi, n)}
\cdot \mathop{\sum}\limits_{x \in \text{Fix}(f^n)} e^{S_n\phi(x)}
\delta_x,$$ where $P(f, \phi, n):= \mathop{\sum}\limits_{x \in
\text{Fix}(f^n)} e^{S_n\phi(x)}, n \ge 1$.  So we have
\begin{equation}\label{ai}
\tilde \mu_n(A_i) = \frac {1}{P(f, \phi, n)} \cdot \mathop{\sum}\limits_{x \in \text{Fix}(f^n) \cap A_i} e^{S_n\phi(x)},
n \ge 1
\end{equation}

Let us consider now a periodic point $x \in \text{Fix}(f^n) \cap A_i$; by definition of $A_i$, it follows that $f^m(x) \in A_i$, so there exists a point $y \in A_j$ such that $f^m(y) = f^m(x)$. Of course the point $y$ does not have to be periodic necessarily. But we can use Specification Property (\cite{KH}, \cite{Bo}) on the hyperbolic compact locally maximal set $\Lambda$. Indeed if $\vp>0$ is fixed, then there exists a constant $M_\vp>0$ such that for all $n > M_\vp$, there is a $z \in \text{Fix}(f^n)$ s.t $z$ $\vp$-shadows the $(n-M_\vp)$-orbit of $y$. Thus $z \in B_m(y_i, 2\vp)$ if $y_i$ is so that $y \in B_m(y_i, \vp)$.

Let now $V$ be an arbitrary neighbourhood of the set $A_j$. Then
if $n$ is large enough and $\vp>0$ fixed, it follows that $z \in
V$. \  Let us also take two points $x, \tilde x \in
\text{Fix}(f^n)\cap A_i$ and assume the same periodic point $z\in
V \cap \text{Fix} (f^n)$ corresponds to both through the previous
procedure. This means that the $(n- M_\vp-m)$-orbit of $f^m z$
$\vp$-shadows the $(n-M_\vp-m)$-orbit of $f^m x$ and also the
$(n-M_\vp-m)$-orbit of $f^m \tilde x$. Hence the
$(n-M_\vp-m)$-orbit of $f^m x$ $2\vp$-shadows the
$(n-M_\vp-m)$-orbit of $f^m \tilde x$. But recall that we chose
$x, \tilde x \in A_i \subset B_m(y_i, \vp)$, hence $\tilde x \in
B_{n-M_\vp}(x, 2\vp)$.

Now we can split the set $B_{n-M_\vp}(x, 2\vp)$ in at most $N_\vp$
smaller Bowen ball of type $B_n(\zeta, 2\vp)$. In each of these
$(n, 2\vp)$-Bowen balls we may have at most one fixed point for
$f^n$. This holds since fixed points for $f^n$ are solutions to
the equation $f^n \xi = \xi$ and, on tangent spaces we have that
$Df^n - Id$ is a linear map without eigenvalues of absolute value
1. Thus if $d(f^i \xi, f^i \zeta) < 2\vp, i = 0, \ldots, n$ and if
$\vp$ is small enough, it follows that we can apply the Inverse
Function Theorem at each step. Therefore there will exist only one
fixed point for $f^n$ in the Bowen ball $B_n(\zeta, 2\vp)$. So
there may exist at most $N_\vp$ periodic points from
$\text{Fix}(f^n)\cap \Lambda$ which have the same point $z \in V$
attached to them by the above procedure. In conclusion, to each
point $x \in \text{Fix}(f^n)\cap A_i$, there corresponds at most
finitely many points $z\in V$ obtained by specification from the
above procedure; their number is smaller than $N_\vp$. Let us
notice also that if $x, \tilde x$ have the same point $z\in V$
attached to them, then as seen before $\tilde x \in B_{n-M_\vp}(x,
2\vp)$ and thus, from the Holder continuity of $\phi$:

$$|S_n\phi(x) - S_n\phi(\tilde x)| \le \tilde C_\vp,$$ for some
positive constant $\tilde C_\vp$ depending on $\phi$ (but
independent of $n, x$). This can be used then in the estimate for
$\tilde \mu_n(A_i)$, according to (\ref{ai}). Now we use the fact
that if $z \in B_{n-M_\vp}(y, \vp)$, then $f^m(z) \in
B_{n-M_\vp-m}(f^mx, \vp)$. From the Holder continuity of $\phi$
and the fact that $x \in A_i \subset B_m(y_i, \vp)$, it follows
that there exists a constant $\tilde C_\vp$ (we denote it the same
as before, without loss of generality)
 satisfying:
\begin{equation}
|S_n \phi(z) - S_n\phi(x)| \le |S_m\phi(y_i) - S_m\phi(y_j)| +
\tilde C_\vp,
\end{equation}
 for $n > n(\vp, m)$.
This, together with (\ref{ai}) and the fact that there are at most
$N_\vp$ points $x \in \text{Fix}(f^n)$ which have the same
attached $z \in V \cap \text{Fix}(f^n)$, imply that there exists a
constant $C_\vp>0$ such that
\begin{equation}\label{ineq1}
\tilde \mu_n(A_i) \le C_\vp \tilde \mu_n(V) \cdot
\frac{e^{S_m\phi(y_i)}}{e^{S_m\phi(y_j)}},
\end{equation}

where we recall that $A_i \subset B_m(y_i, \vp), A_j \subset B_m(y_j, \vp)$.
But since $\partial A_i, \partial A_j$ have $\mu$-measure zero, we obtain:
$$
\mu(A_i) \le C_\vp \mu(V) \frac{e^{S_m\phi(y_i)}}{e^{S_m\phi(y_j)}}, i, j =1, \ldots, k
$$

Now $V$ was chosen arbitrarily as a neighbourhood of $A_j$, thus
$$ \mu(A_i) \le C_\vp \mu(A_j)
\frac{e^{S_m\phi(y_i)}}{e^{S_m\phi(y_j)}}, i, j = 1, \ldots, k
$$
Similarly we prove also the other inequality.

 \end{proof}

\begin{thm}\label{teorema}
Let $f:M \to M$ be a smooth (say $\mathcal{C}^2$) map on a
Riemannian manifold $M$, which is hyperbolic and finite-to-one on
a basic set $\Lambda$ so that $\mathcal{C}_f \cap \Lambda =
\emptyset$. Assume that $\phi$ is a Holder continuous potential on
$\Lambda$ and that $\mu_\phi$ is the equilibrium measure of $\phi$
on $\Lambda$. Then with the notation from (\ref{mu-n}), $$ \int
_\Lambda |<\mu_n^x-\mu_\phi, g>| d\mu_\phi(x) \mathop{\to}\limits_{n
\to \infty} 0, \forall g \in \mathcal{C}(\Lambda, \mathbb R) $$
\end{thm}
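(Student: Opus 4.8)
The plan is to reduce the integral estimate to a statement about the lifted measure $\hat\mu_\phi$ on the natural extension $\hat\Lambda$, where the von Neumann $L^1$ ergodic theorem can be applied, and then transfer the conclusion back down to $\Lambda$ using the combinatorial control of preimages provided by Lemma~\ref{inverse-it} and Lemma~\ref{measure-lift}. First I would fix $g\in\mathcal C(\Lambda,\mathbb R)$ and $\vp>0$, and cover $\Lambda$ by finitely many sets of small diameter on which $g$ varies by less than $\vp$; the mass that $\mu_n^x$ puts near a given region is, up to the multiplicative constant $C_\vp$ of Lemma~\ref{inverse-it} and the bounded distortion of $S_n\phi$ along Bowen balls, governed by the weights $e^{S_n\phi(y)}/\sum_z e^{S_n\phi(z)}$. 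Using Proposition~\ref{eq-endo}(a), these weights are comparable to $\mu_\phi(B_n(y,\vp))/\mu_\phi(\cup_z B_n(z,\vp))$, so $\langle\mu_n^x,g\rangle$ is, up to an error tending to $0$ with $\vp$, the $\mu_\phi$-average of $\frac1n\sum_{i=0}^{n-1}g(f^i y)$ over the $n$-preimages $y$ of $x$ weighted by $\mu_\phi$-mass of their Bowen balls — i.e. a conditional expectation.

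The key conceptual step is to recognize that this weighted average over $n$-preimages is exactly (again up to $C_\vp$) the conditional expectation with respect to $\hat f$ of the Birkhoff-type function evaluated along prehistories. Concretely, for $\hat x=(x,x_{-1},\dots)\in\hat\Lambda$ consider $G_n(\hat x):=\frac1n\sum_{i=0}^{n-1}g(x_{-n+i})=\frac1n S_n g(x_{-n})$; then I would show that $\langle\mu_n^x,g\rangle$ is comparable to $\int G_n\,d\hat\mu_\phi^x$ where $\hat\mu_\phi^x$ is the conditional measure of $\hat\mu_\phi$ on the fiber $\pi^{-1}(x)$ (this is where Lemma~\ref{measure-lift} and the disintegration of $\hat\mu_\phi$ enter, together with the fact that $\mathcal C_f\cap\Lambda=\emptyset$ and finite-to-oneness make the local inverse branches well-defined on small balls). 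Writing $G_n(\hat x)=\frac1n S_n g(x_{-n})=\frac1n\sum_{j=1}^{n}(g\circ\pi)(\hat f^{-j}\hat x)$, this is precisely a Cesàro average of $g\circ\pi$ along the backward orbit of the homeomorphism $\hat f$. By the von Neumann $L^1$ ergodic theorem applied to $\hat f^{-1}$ acting on $L^2(\hat\mu_\phi)$ — and using that $\hat\mu_\phi$ is ergodic because $\mu_\phi$ is — these averages converge in $L^1(\hat\mu_\phi)$ to the constant $\int g\circ\pi\,d\hat\mu_\phi=\int g\,d\mu_\phi=\langle\mu_\phi,g\rangle$. Integrating $|\langle\mu_n^x,g\rangle-\langle\mu_\phi,g\rangle|$ against $d\mu_\phi(x)$ and pushing the $\hat\Lambda$-convergence down through $\pi$ then gives the claim, with the $C_\vp$-discrepancy absorbed by letting $\vp\to0$ after $n\to\infty$.

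The main obstacle, and the part requiring genuine care rather than routine estimation, is the interchange of "average over the uncountably many prehistories" with "ergodic average along a single prehistory": the pointwise Birkhoff theorem applied fiberwise does not suffice because the rate of convergence is not uniform over the fiber $\pi^{-1}(x)$, so one really must work with the $L^1$ (mean) ergodic theorem on $\hat\Lambda$ and control the passage between the fiberwise weighted sum defining $\mu_n^x$ and the global integral $\int G_n\,d\hat\mu_\phi$. Making that passage rigorous forces one to quantify, via Lemma~\ref{inverse-it}, that the $\mu_\phi$-measure of the "badly behaved" part of $f^{-n}x\cap\Lambda$ — the preimages along which $\frac1n S_n\phi$ is far from its typical value, or along which the tubular unstable sets $T_n(\hat x_{-n},\vp)$ overlap heavily — is exponentially small, so that its contribution to $\langle\mu_n^x,g\rangle$ is negligible; the overlap estimate in turn relies on comparing $\mu_\phi$-measures of intersections of distinct tubular unstable neighbourhoods, which is where the Hölder dependence of unstable manifolds on prehistories and the finite-to-one hypothesis are used. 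Once the bad part is discarded, the good part carries almost all the weight and almost all the $\mu_\phi$-measure of the relevant Bowen balls, and on it $G_n$ is essentially constant equal to $\langle\mu_\phi,g\rangle+o(1)$, completing the argument.
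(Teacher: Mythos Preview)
Your high-level strategy matches the paper's: apply the von Neumann $L^1$ ergodic theorem to the backward Ces\`aro averages $G_n(\hat x)=\frac1n\sum_{j=1}^n (g\circ\pi)(\hat f^{-j}\hat x)$ on $(\hat\Lambda,\hat\mu_\phi)$, then push the $L^1$-smallness down to $\Lambda$ using Lemma~\ref{measure-lift} and Lemma~\ref{inverse-it}. Two points in your plan need correction, and the second is a genuine gap.

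First, the ``bad'' preimages you must isolate are those where $\frac1n S_n g(y)$ (for the fixed test function $g$) is far from $\int g\,d\mu_\phi$, not those where $\frac1n S_n\phi(y)$ is atypical. The potential $\phi$ enters only through the weights; its Birkhoff averages play no role, and there is no ``exponentially small'' estimate available --- all you obtain from von Neumann plus Lemma~\ref{measure-lift} is that $\mu_\phi\bigl(\{y:|\frac1n S_ng(y)-\int g\,d\mu_\phi|\ge\eta\}\bigr)<\vp'$ for $n$ large.

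Second, and more seriously, your plan to ``absorb the $C_\vp$-discrepancy by letting $\vp\to0$ after $n\to\infty$'' does not work. The constants in Proposition~\ref{eq-endo}(a) and Lemma~\ref{inverse-it} are fixed positive numbers for each $\vp$, with no reason to approach $1$ as $\vp\to0$. Hence $\langle\mu_n^x,g\rangle$ is comparable to the fibre integral $\int G_n\,d\hat\mu_\phi^x$ only up to a \emph{fixed} multiplicative constant, and a bounded multiplicative error on a quantity that is not already known to be small gives nothing. The paper sidesteps this by introducing a threshold parameter $\beta$ (taken as $3\eta$) and splitting $\Lambda$ into the set $D_n(\beta,\eta)$ of points $x$ at which the total \emph{weight} carried by bad $n$-preimages is $<\beta$, and its complement. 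On $D_n(\beta,\eta)$ the integrand $I_n(g,x)=\sum_y w_y|\Sigma_n(g,y)|$ is bounded by $2\|g\|\beta+2\eta$ straight from the definition --- no comparison constants enter. On the complement, one decomposes $\Lambda\setminus D_n(\beta,\eta)$ into pieces $S$ according to which Bowen balls from a maximal $(n,\vp)$-separated family contain the various $n$-preimages (this is the tubular combinatorics you allude to), uses the $f$-invariance $\mu_\phi(S)=\sum_j\mu_\phi(f^{-n}S\cap B_j)$, and applies Lemma~\ref{inverse-it} together with the condition defining the complement to obtain $\mu_\phi(\Lambda\setminus D_n(\beta,\eta))\le \tilde C_\vp\,\vp'/\beta$. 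The constants $C_\vp,\tilde C_\vp$ are then harmless precisely because they multiply $\vp'$, which can be made as small as desired relative to $\eta$ and $\beta$; they never multiply the target integral directly. This $\beta$-threshold device is the missing idea in your outline.
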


\begin{proof}

We make the convention that all the preimages that we work with
are in $\Lambda$. So we shall write $f^{-n} x$ for $f_\Lambda^{-n}
x, n \ge 1, x \in \Lambda$.

If $\phi$ is a Holder continuous function on $\Lambda$ if follows
from Proposition \ref{eq-endo} that there exists a unique
equilibrium measure $\mu_\phi$ for $\phi$, and $\mu_\phi$ is the
push-forward of the equilibrium measure $\hat \mu_\phi$ of $\phi
\circ \pi$ on $\hat \Lambda$. For simplicity of notation, we shall
denote the measure $\mu_\phi$ by $\mu$, with $\phi$ being fixed.
This measure is ergodic as being an equilibrium measure.

Let us fix now a continuous test function $g : \Lambda \to \mathbb
R$. From von Neumann's $L^1$ Ergodic Theorem applied to the
homeomorphism $\hat f^{-1}:\hat \Lambda \to \hat \Lambda$ and the
potential $g\circ \pi$ , we know that

\begin{equation}\label{neumann}
\int_{\hat \Lambda} |\frac 1n \mathop{\sum}\limits_{i=0}^{n-1}
g(x_{-i}) - \int_{\hat \Lambda} g\circ \pi \ d\hat\mu| \
d\hat\mu(\hat x) \mathop{\to}\limits_{n\to \infty} 0,
\end{equation}
where the prehistory $\hat x = (x, x_{-1}, x_{-2}, \ldots, x_{-i},
\ldots) \in \hat \Lambda$.  Denote by $$ \Sigma_n(g, y):=
\frac{\mathop{\sum}\limits_{i=0}^{n-1} g(f^i y)}{n} - \int _\Lambda g
d\mu, y \in \Lambda, n \ge 2 $$

Hence for an arbitrary small $\eta >0$, we have from
(\ref{neumann}) that:

$$ \hat \mu(\{\hat x = (x, x_{-1}, \ldots)\in \hat \Lambda,
|\Sigma_n(g, x_{-n})|\ge \eta\}) \mathop{\to}\limits_{n\to \infty}
0 $$

So for any $\vp'>0, \vp' = \vp'(\eta) << \eta$, there exists
$n(\eta)\ge 1$ so that if $n
> n(\eta)$ then
 $$ \hat \mu(\hat x, |\Sigma_n(g, x_{-n})| \ge \eta) <
\vp' $$ But now $\{\hat x \in \hat \Lambda, |\Sigma_n(g, x_{-n})|
\ge \eta\}$ is a closed set in $\hat \Lambda$, thus we can apply
Lemma \ref{measure-lift} to prove that:

\begin{equation}\label{preh}
\mu(x_{-n} \in \Lambda, |\Sigma_n(g, x_{-n})| \ge \eta) < \vp',
\end{equation}
if $n$ is large enough ( without loss of generality we can take $n
> n(\eta)$).

Now let us consider a small $\vp>0$ with $\vp < \vp(\eta) << \eta$
such that $\omega(3\vp) < \eta$, where $\omega(r)$ denotes in
general the maximal oscillation of $g$ on a ball of radius $r>0$.
Let us take also a maximal set of mutually disjoint $n$-Bowen
balls $B_n(y, \vp)$ in $\Lambda$; denote the set of such $y$ by
$\mathcal{F}_n$. Thus $\{B_n(y, \vp), y \in \mathcal{F}_n\}$ is
our maximal set. If $z \notin \mathcal{F}_n$, then from the
definition, $B_n(z, \vp)$ must intersect some Bowen ball $B_n(y,
\vp), y \in \mathcal{F}_n$. Thus $B_n(z, \vp) \subset B_n(y,
3\vp)$. \  Let us notice also that if $w \in B_n(z, 3\vp)$ then
\begin{equation}\label{osc}
|\Sigma_n(g, w)| \le |\Sigma_n(g, z)| + \omega(3\vp)
\end{equation}

 In the sequel we will split different subsets of $\mathcal{F}_n$ in two disjoint subsets
$\mathcal{R}_n, \mathcal{G}_n$, with $\mathcal{R}_n \subset \{x
\in \Lambda, |\Sigma_n(g, x)| \ge 2\eta\}$ and $\mathcal{G}_n
\subset \{x \in \Lambda, |\Sigma_n(g, x)| < 2\eta\}$. Intuitively
$\mathcal{R}_n$ consists of the "bad" $n$-preimages (corresponding
to $g, \eta$) and $\mathcal{G}_n$ are the "good" $n$-preimages. 

Recall now that we denoted $f_\Lambda^{-n} x:= \Lambda \cap f^{-n}x, n \ge 1, x \in \Lambda$, and that $\Lambda$ is not necessarily totally invariant. 
Consider then $$I_n(g, x):= \mathop{\sum}\limits_{y \in f_\Lambda^{-n}x}
\frac{e^{S_n\phi(y)}}{\mathop{\sum}\limits_{z \in
f_\Lambda^{-n}x}e^{S_n\phi(z)}} \cdot |\Sigma_n(g, y)|, x \in \Lambda, n
\ge 1 $$

Denote also by $$ \Lambda(n, \eta):= \{ x\in \Lambda, \text{s. t} \ x \
\text{has at least one} \ n-\text{preimage} \ x_{-n} \ \text{with}
\ \Sigma_n(g, x_{-n})| \ge 2\eta\} $$

The problem is that the unstable manifolds may depend on the whole
prehistories, thus if we take $z, w \in f^{-n}x$, then $f^n B_n(w,
\vp)$ and $f^nB_n(z, \vp)$ may be different unstable tubular sets;
these unstable tubular sets intersect each other in a (possibly
smaller) set containing $x$. \ By taking $n$-preimages for all
points in $\mathcal{F}_n$ and then the corresponding tubular
unstable sets as above, we shall obtain a collection of such
tubular unstable sets which intersect each other in different
smaller pieces, denoted generically by $D$. \ We have to estimate
$\int_\Lambda I_n(g, x) d\mu(x)$, which is a sum of integrals on
sets of type $D$. The problematic terms in this sum are those of
type $\frac{e^{S_n\phi(y)}}{\mathop{\sum}\limits_{z \in f_\Lambda^{-n}x}
e^{S_n\phi(z)}} \cdot |\Sigma_n(g, y)| \mu(D)$, with $|\Sigma_n(g,
y)| \ge 2\eta$, but so that \textbf{at the same time} there exist
also good $n$-preimages for every point $x \in D$ . In other words
$x \in \Lambda(n, \eta)$, but $x$ has also good $n$-preimages. The
set of points where \textbf{all} $n$-preimages from $\Lambda$ are
bad, is easier to measure, by using Lemma \ref{measure-lift}. In
the integral $\int_\Lambda I_n(g, x) d\mu(x)$ we have to deal both
with the measures of subsets $D$, and with the $n$-preimages $y$
of points $x \in D$, namely with the quantities $\Sigma_n(g, y)|$.

For the measures of subsets $D$ we shall use the $f$-invariance of
$\mu$, namely $\mu(D) = \mu(f^{-n}(D))$. Then we will use the
estimate on the measure of the set of bad preimages from Lemma
\ref{measure-lift}, coupled with the existent control on the good
preimages given by $|\Sigma_n(g, y)| < \eta$. For the rest of the
preimages, the idea is to control the sum of the measures of these
inverse iterates by the measure of the set of bad preimages.

First let us notice that if $x \in \Lambda$ and $y, z \in f^{-1} x
\cap \Lambda, y \ne z$, then since the critical set
$\mathcal{C}_f$ does not intersect $\Lambda$, it must exist a
positive constant $\vp_0$ so that $d(y, z) > \vp_0$. Hence if
similarly for some $n \ge 1$, we take two distinct $n$-preimages
$y, z \in \Lambda$ of $x$ ($f^ny = f^n z = x, y \ne z$), then we
cannot have $y, z \in B_n(w, 3\vp)$ for any $w \in \Lambda$, if
$\vp << \vp_0$. \  We know also that the Bowen balls $B_n(y,
3\vp)$ cover $\Lambda$ when $y$ ranges in $\mathcal{F}_n$, and
$B_n(y_i, \vp) \cap B_n(y_j, \vp) = \emptyset$ for any two
different points $y_i, y_j$ from $\mathcal{F}_n$.

Let us take then a small $0< \beta < 1$ such that $\eta < \beta$;
to fix ideas we will consider $\beta = 3 \eta$. If $x \in
\Lambda$, denote by $R_n(x)$ the set of $n$-preimages $y \in \Lambda$ of $x$
with $|\Sigma_n(g, y)| > 2\eta$ (in fact $R_n(x)$ depends on the
$\eta$ as well, but we do not record this here in order to
simplify notation). Let us denote now

\begin{equation}\label{D}
D_n(\beta, \eta):= \{x \in \Lambda(n, \eta),
\frac{\mathop{\sum}\limits_{y \in R_n(x)}
e^{S_n\phi(y)}}{\mathop{\sum}\limits_{y \in f^{-n} x}
e^{S_n\phi(y)}} < \beta \}
\end{equation}

Now if $||g||:= \mathop{\sup}\limits_{y \in \Lambda} |g(y)|$, it
follows from definition that for any $y \in \Lambda$, we have
$|\Sigma_n(g, y)| \le 2 ||g||$; thus for a point $x \in D_n(\beta,
\eta)$:

$$ \aligned  \mathop{\sum}\limits_{y \in f_\Lambda^{-n}x} \frac{e^{S_n
\phi(y)}}{\mathop{\sum}\limits_{z \in f_\Lambda^{-n}x}e^{S_n\phi(z)}}
|\Sigma_n(g, y)|  = & \mathop{\sum}\limits_{y \in R_n(x)}
\frac{e^{S_n\phi(y)}}{\mathop{\sum}\limits_{z \in f_\Lambda^{-n}x}
e^{S_n\phi(z)}} |\Sigma_n(g, y)| + \mathop{\sum}\limits_{y \in
f_\Lambda^{-n}x \setminus R_n(x)}
\frac{e^{S_n\phi(y)}}{\mathop{\sum}\limits_{z \in
f_\Lambda^{-n}x}e^{S_n\phi(z)}} |\Sigma_n(g, y)|  \\ & \le 2||g|| \cdot
\beta + \eta
\endaligned
$$

Therefore
\begin{equation}\label{est-D}
\int_{D_n(\beta, \eta)} I_n(g, x) d\mu(x) \le 2||g|| \beta + 2\eta
\end{equation}

We will now restrict to the complement of $D_n(\beta, \eta)$ in
$\Lambda$, so we work with points $x \in \Lambda$ for which
$\frac{\mathop{\sum}\limits_{y \in R_n(x)}
e^{S_n\phi(y)}}{\mathop{\sum}\limits_{y \in f^{-n}x}
e^{S_n\phi(y)}} \ge \beta$.  From Proposition \ref{eq-endo} we
know that for any $z \in \Lambda, n \ge 1$, $A_\vp\cdot
e^{S_n\phi(z) - nP(\phi)} \le \mu(B_n(z, \vp)) \le B_\vp \cdot
e^{S_n\phi(z) - nP(\phi)}$. Therefore we have

\begin{equation}\label{JN}
\mathop{\sum}\limits_{y \in f_\Lambda^{-n} x} \mu(B_n(y, \vp)) \le
\frac{C_\vp}{\beta} \cdot \mathop{\sum}\limits_{z \in R_n(x)}
\mu(B_n(z, \vp))
\end{equation}

Obviously from the definition of $\mathcal{F}_n$ we can place any
$n$-preimage of $x$ in a distinct Bowen ball of type $B_n(z,
3\vp)$, for some $z \in \mathcal{F}_n$. From Proposition
\ref{eq-endo} it follows that the $\mu$-measure of a Bowen ball of
type $B_n(z, \vp)$ is comparable to the $\mu$-measure of the Bowen
ball $B_n(z, 3\vp)$ (by \textbf{comparable} we mean that their
quotient is bounded below and above by a positive constant
independent of $z, n$). \ The idea is to split now the set
$\Lambda \setminus D_n(\beta, \eta)$ into subsets $A$ such that
points from $A$ have the same number of $n$-preimages. Let us
assume that $d$ is the maximum number of $1$-preimages that a
point may have in $\Lambda$. Then any point from $\Lambda$ has at
most $d^n$ $n$-preimages in $\Lambda$. We will use now tubular
unstable sets $T_n(\hat x, 3\vp)$ obtained from different Bowen
balls centered at the points of $\mathcal{F}_n$.  For an integer
$\ell \ge 2$ let us consider the sets of type

$$f^n(B_{i_1}) \cap \ldots \cap f^n(B_{i_\ell}) \setminus
[\mathop{\cup}\limits_{\ell < |J| \le d^n, J \subset
\mathcal{F}_n} \mathop{\cap}\limits_{j \in J} f^n(B_j)],$$

where $B_{j}:= B_n(y_j, 3\vp)$ for some points $y_j \in
\mathcal{F}_n$, and where we assume no repetitions among the
respective Bowen balls. This means actually that we do not repeat
a Bowen ball $B_j$ in the above intersection; if $y, z$ are
different $n$-preimages of the same point $x$ and $\vp$ is small
enough, then we cannot have $y, z$ in the same $B_n(\zeta, 3\vp)$.
Each point in such a set has exactly $\ell$ $n$-preimages, one in
each of the Bowen balls $B_{i_1}, \ldots, B_{i_\ell}$.

Let us denote the collection of all such sets by $F(\ell, n, \vp)$
and let $D \in F(\ell, n, \vp)$.
 Consider the sets of type $D \setminus \mathop{\cup}\limits_{D'
 \in F(\ell, n, \vp), D' \ne D} D', D \in F(\ell, n, \vp)$.
 These sets are now mutually disjointed, borelian, and cover
 $\Lambda$. Their collection will be denoted by $\tilde F(\ell, n,
 \vp)$.

We want to estimate the measure $\mu(\Lambda \setminus D_n(\beta,
\eta))$. In order to do this, we will split $\Lambda \setminus
D_n(\beta, \eta)$ into mutually disjoint borelian subsets,
obtained by intersecting the sets of $\tilde F(\ell, n, \vp)$ with
$\Lambda \setminus D_n(\beta, \eta)$. Let us denote the collection
of these intersections by $H(\ell, n, \vp), \ell \ge 1$.

We will take an arbitrary subset $S \in H(\ell, n , \vp)$, say $S
\subset f^n(B_{i_1}) \cap \ldots \cap f^n(B_{i_\ell})$. From the
$f$-invariance of $\mu$, it follows that $\mu(S) = \mu(f^{-n} S)=
\mu(f^{-n} S \cap B_{i_1}) + \ldots + \mu(f^{-n}S \cap
B_{i_\ell})$. We have that the Bowen balls $B_{i_s}, s = 1,
\ldots, \ell$ are mutually disjoint as they contain different
$n$-preimages of the same point (we know that for small $\vp$ one
cannot have two different $n$-preimages of the same point,
belonging to the same $B_n(y, 3\vp), y \in \Lambda$). Let us
denote by $S_n(i_1):= f^{-n}S \cap B_{i_1}, \ldots, S_n(i_k):=
f^{-n} S \cap B_{i_k}$. So $$\mu(S) = \mu(S_n(i_1) + \ldots +
\mu(S_n(i_k))$$ We assume that $\mu(S_n(i_1)) >0$, otherwise we
can take a different $S_n(j)$. But now from Lemma \ref{inverse-it}
and the fact that $S \subset \Lambda \setminus D_n(\beta, \eta)$,
it follows that

\begin{equation}\label{co}
\aligned & \mu(S_n(i_1)) + \ldots + \mu(S_n(i_k)) \le
C_\vp\frac{\mu(S_n(i_1))}{\mu(B_{i_1})} \left[ \mu(B_{i_1}) +
\ldots + \mu(B_{i_k})\right] \le \\ & \le
C_\vp\frac{\mu(S_n(i_1))}{\mu(B_{i_1})} \cdot \frac {1}{\beta}
\mathop{\sum} \limits_{B_j \in R_n(x), j \in \{i_1, \ldots, i_k\}}
\mu(B_j)
\endaligned
\end{equation}

Suppose that $T$ is another disjoint set from some $H(p, n, \vp)$,
so that $T \subset f^n(B_{i_1}) \cap f^n(B_{j_2}) \cap \ldots \cap
f^n(B_{j_p})$ and let $T_n(i_1), T_n(j_2), \ldots, T_n(j_p)$ be
the corresponding parts of $f^{-n}T$ belonging respectively to
$B_{i_1}, B_{j_2}, \ldots, B_{j_p}$. So the sets $S_n(i_1),
\ldots, S_n(i_\ell), T_n(i_1), T_n(j_2), \ldots, T_n(j_p)$ are
mutually disjoint borelian subsets. We assumed that $S$ and $T$
have both $n$-preimages in $B_{i_1}$ (for example). If they have
$n$-preimages in completely different Bowen balls, then the
situation will be simpler, since there will be no repetitions
below in (\ref{ST}).

Let us estimate now $\mu(S) + \mu(T)$; for this we consider two
points $x \in S$ and $y \in T$ and assume that $\{i_1, \ldots,
i_\ell\} \cap \{i_1, j_2, \ldots, j_p\} \cap R_n(x) = \{l_1,
\ldots, l_r\}$. Then from (\ref{co}), we obtain:

\begin{equation}\label{ST}
\aligned  \mu(S) + \mu(T) & \le \frac{C_\vp}{\beta}  \left[
\frac{\mu(S_n(i_1))}{\mu(B_{i_1})} \cdot \mathop{\sum}\limits_{B_j
\in R_n(x), j \in \{i_1, \ldots, i_\ell\}} \mu(B_j) +
\frac{\mu(T_n(t_1))}{\mu(B_{i_1})} \cdot \mathop{\sum}\limits_{B_s
\in R_n(y), s \in \{i_1, j_2, \ldots, j_p\}} \mu(B_s) \right]
\\ & \le  \frac{C_\vp}{\beta \mu(B_{i_1})} \cdot \left[\mu(S_n(i_1)) +
\mu(T_n(i_1)) \right] \cdot \left[\mu(B_{l_1}) + \ldots
\mu(B_{l_r})\right] + \\ &     \hspace{0.6in} + C_\vp \cdot
\frac{\mu(S_n(i_1))}{\beta \mu(B_{i_1})} \cdot \Sigma(S, n) +
C_\vp \cdot \frac{\mu(T_n(i_1))}{\beta \mu(B_{i_1})} \cdot
\Sigma(T, n),
\endaligned
\end{equation}
where $$\Sigma(S, n):= \mathop{\sum}\limits_{j \in
R_n(x)\cap\{i_1, \ldots, i_\ell\} \setminus \{l_1, \ldots, l_r\} }
\mu(B_j)  \ \ \text{and} \  \ \Sigma(T, n):=
\mathop{\sum}\limits_{j' \in \{i_1, j_2, \ldots, j_p\} \cap R_n(x)
\setminus \{l_1, \ldots, l_r\}} \mu(B_{j'})$$

But recall that the subsets $S_n(i_1)$ and $T_n(i_1)$ are disjoint
inside $B_{i_1}$, hence $\mu(S_n(i_1)) + \mu(T_n(i_1)) \le
\mu(B_{i_1})$. Since $\{l_1, \ldots, l_r\} = \{i_1, \ldots, i_l\}
\cap \{i_1, j_2, \ldots, j_p\} \cap R_n(x)$, the sums $\Sigma(S,
n)$ and $\Sigma(T, n)$ do not have common terms and do not have
any term from the collection $\{B_{l_1}, \ldots, B_{l_r}\}$. Thus
from (\ref{ST}) we obtain that $$ \mu(S) + \mu(T) \le \frac
{C_\vp}{\beta} \left[\mu(B_{l_1}) + \ldots + \mu(B_{l_r}) +
\Sigma(S, n) + \Sigma(T, n)\right] $$

From (\ref{osc}) we obtain that $B_i \subset \{y \in \Lambda,
|\Sigma_n(g, y)| \ge \eta\}$. Also from the estimates of
Proposition 1, we know that there exists a positive constant
$\chi_\vp$ such that $\mu(B_i) \le \chi_\vp \mu(B_n(y_i, \vp)),
i\ge 1$. Therefore, recalling also that the balls $B_n(y_i, \vp),
y_i \in \mathcal{F}_n$ are mutually disjoint we have that

$$ \aligned  &\mathop{\sum}\limits_{1 \le k \le r}\mu(B_{l_k}) +
\Sigma(S, n) + \Sigma(T, n) \le \chi_\vp \cdot
[\mathop{\sum}\limits_{1 \le k \le r} \mu(B_n(y_{l_k}, \vp)) +
\mathop{\sum}\limits_{j \in R_n(x) \cap \{i_1, \ldots, i_\ell\}
\setminus \{l_1, \ldots, l_r\}} \mu(B_n(y_j, \vp)) + \\
 & +\mathop{\sum}\limits_{j' \in R_n(x) \cap \{i_1, j_2, \ldots, j_p\}
\setminus \{l_1, \ldots, l_r\}} \mu(B_n(y_{j'}, \vp))]  = \chi_\vp
\cdot \mu(\mathop{\cup}\limits_{j \in R_n(x)} B_n(y_j, \vp)) \le
\chi_\vp \cdot \mu(y, |\Sigma_n(g, y)| \ge \eta)
\endaligned$$
 But from (\ref{preh}) we can control
the total measure of the set of bad $n$-preimages, which is thus
smaller than $\vp'$. Hence
\begin{equation}\label{mollie}
\mathop{\sum}\limits_{1 \le k \le r}\mu(B_{l_k}) + \Sigma(S, n) +
\Sigma(T, n) \le \chi_\vp \cdot \vp'
\end{equation}

This procedure can be used for any collection of mutually disjoint
borelian subsets from the collections $H(\ell, n, \vp), 1 \le \ell
\le d^n$, not only for $S, T$. Indeed by using Lemma
\ref{inverse-it} and the disjointness of sets from $H(\ell, n,
\vp)$ (hence also the mutual disjointness of the sets of their
$n$-preimages) we see that the weights associated in (\ref{ST}) to
any measure $\mu(B_j)$ (where $B_j$ corresponds to a bad
$n$-preimage) never add up to more than 1.

Then similarly as in (\ref{mollie}), by employing the control on
the total measure of bad $n$-preimages from (\ref{preh}), we can
conclude that for $n > n(\eta)$:

\begin{equation}\label{conc}
\mu(\Lambda \setminus D_n(\beta, \eta)) \le C_\vp
\cdot\frac{\vp'}{\beta}\cdot \chi_\vp = \tilde C_\vp \cdot
\frac{\vp'}{\beta}
\end{equation}

Therefore by using (\ref{est-D}) and (\ref{conc})

$$ \aligned
 \int_\Lambda I_n(g, x) d\mu(x) & \le 2\eta +
\int_{\Lambda(\eta)} I_n(g, x) d\mu(x) \le \\
& \le 2\eta + \int_{D_n(\beta, \eta)} I_n(g, x) d\mu(x) + \int_{\Lambda
\setminus D_n(\beta, \eta)} I_n(g, x) d\mu(x)  \le \\ &  \le
2\eta + 2||g|| \beta + 2\eta + 2 ||g|| \mu(\Lambda \setminus
D_n(\beta, \eta)) \le 4\eta + 2||g|| \beta + 2||g|| \tilde
C_\vp\frac{\vp'}{\beta},
\endaligned
$$ for $n > n(\eta)$. Recall however that we assumed before that
$3\eta= \beta$. Assume that $\vp'$ is so small that $\tilde C_\vp
\cdot \frac{\vp'}{3\eta} < \eta$.
 Then from the
last displayed inequality, it follows that there exists a positive
constant $C' = 4 + 8||g||$ so that:

$$ \int_\Lambda I_n(g, x) d\mu(x) \le C' \cdot \eta, \text{for} \
n
> n(\eta) $$

This shows in conclusion that $$ \int_\Lambda I_n(g, x) d\mu(x)
\mathop{\to}\limits_{n \to \infty} 0, \forall g \in
\mathcal{C}(\Lambda, \mathbb R). $$
\end{proof}

Hence we proved the convergence in integral (with respect to
$d\mu_n(x)$) of the measures $\mu_n^x$ from (\ref{mu-n}), towards
the equilibrium measure $\mu_\phi$ of $\phi$, in the hyperbolic
non-invertible case.

\begin{cor}\label{subsequence}
In the same setting as in Theorem \ref{teorema}, for any Holder
potential $\phi $, it follows that there exists a subset $E
\subset \Lambda$, with $\mu_\phi(E) = 1$ and an infinite
subsequence $(n_k)_k$ such that for any $z \in E$ we have the weak
convergence of measures $$ \mu^z_{n_k} \mathop{\to}\limits_{k \to
\infty} \mu_\phi $$ In particular, if $\mu_0$ is the measure of
maximal entropy, it follows that for $\mu_0$-almost all
 points $x \in \Lambda$,
$
\frac {1}{n_k} \mathop{\sum}\limits_{y \in f^{-n_k} (x) \cap \Lambda}
\frac{\mathop{\sum}\limits_{i = 0}^{n_k-1} \delta_{f^i y}}
{\text{Card}(f^{-n_k} (x) \cap \Lambda)} \mathop{\longrightarrow}\limits_{k \to
\infty} \mu_0$, for a subsequence $(n_k)_k$.
\end{cor}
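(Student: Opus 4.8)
The plan is to upgrade the $L^1(\mu_\phi)$-convergence provided by Theorem \ref{teorema} to $\mu_\phi$-almost everywhere weak-$\star$ convergence along a subsequence, by a diagonal argument over a countable dense family of test functions; the maximal-entropy statement will then be the special case $\phi\equiv 0$.

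First I would use that $\Lambda$, being a compact metric space, has separable $\mathcal{C}(\Lambda,\mathbb R)$, and fix a countable dense subset $\{g_j\}_{j\ge 1}$ of it. For each fixed $j$, Theorem \ref{teorema} gives $\int_\Lambda |\langle \mu_n^x-\mu_\phi, g_j\rangle|\,d\mu_\phi(x)\to 0$, i.e. the functions $x\mapsto \langle\mu_n^x-\mu_\phi,g_j\rangle$ converge to $0$ in $L^1(\mu_\phi)$; hence they converge to $0$ pointwise $\mu_\phi$-a.e. along some subsequence of indices. Extracting successively for $j=1,2,\dots$ and passing to a diagonal subsequence, I obtain a single infinite subsequence $(n_k)_k$ such that, for every $j\ge 1$,
$$ \langle \mu_{n_k}^x-\mu_\phi, g_j\rangle\mathop{\longrightarrow}\limits_{k\to\infty} 0 \quad \text{for } \mu_\phi\text{-a.e. } x\in\Lambda. $$
Calling $N_j$ the exceptional $\mu_\phi$-null set for index $j$ and setting $E:=\Lambda\setminus\bigcup_{j\ge 1}N_j$, we have $\mu_\phi(E)=1$.

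Next I would show that $\mu_{n_k}^z\to\mu_\phi$ weak-$\star$ for every $z\in E$. Fix such a $z$, an arbitrary $g\in\mathcal{C}(\Lambda,\mathbb R)$ and $\delta>0$, and pick $g_j$ with $\|g-g_j\|_\infty<\delta$. Since $\mu_{n_k}^z$ and $\mu_\phi$ are probability measures,
$$ |\langle\mu_{n_k}^z-\mu_\phi, g\rangle|\le |\langle\mu_{n_k}^z-\mu_\phi, g_j\rangle| + 2\|g-g_j\|_\infty < |\langle\mu_{n_k}^z-\mu_\phi, g_j\rangle| + 2\delta. $$
Letting $k\to\infty$ (legitimate because $z\notin N_j$) and then $\delta\to 0$ forces $\langle\mu_{n_k}^z-\mu_\phi,g\rangle\to 0$; since $g$ was arbitrary, this is exactly the asserted weak-$\star$ convergence $\mu^z_{n_k}\to\mu_\phi$.

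Finally, for the last assertion I would specialize to $\phi\equiv 0$: then $S_n\phi\equiv 0$, so every weight $e^{S_n\phi(y)}$ equals $1$ and the measure $\mu_n^x$ of (\ref{mu-n}) reduces to $\frac{1}{n}\sum_{y\in f_\Lambda^{-n} x}\frac{1}{\text{Card}(f_\Lambda^{-n} x)}\sum_{i=0}^{n-1}\delta_{f^i y}$, while $\mu_\phi=\mu_0$ is precisely the measure of maximal entropy by Proposition \ref{eq-endo} together with the Variational Principle; the claim is then the statement just proved, applied to $\phi\equiv 0$ and any $g\in\mathcal{C}(\Lambda,\mathbb R)$. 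I do not expect a genuine obstacle here: the one point needing a little care is the diagonal extraction, which must produce an exceptional null set $\bigcup_j N_j$ valid simultaneously for all test functions — this is what makes the a.e. weak-$\star$ convergence (rather than merely a separate a.e. statement for each $g$) go through, and it relies on weak-$\star$ convergence of probability measures on a compact metric space being decided by a countable dense set of continuous functions.
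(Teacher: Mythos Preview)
Your proof is correct and follows essentially the same route as the paper's: both pass from the $L^1(\mu_\phi)$ convergence of Theorem~\ref{teorema} to a.e.\ convergence along a subsequence for each member of a countable dense family in $\mathcal{C}(\Lambda,\mathbb R)$, apply a diagonal extraction, intersect the resulting full-measure sets, and use a uniform-norm approximation to upgrade to weak-$\star$ convergence for all continuous $g$. Your write-up is in fact slightly more explicit than the paper's (you spell out the $2\|g-g_j\|_\infty$ bound and the separability of $\mathcal{C}(\Lambda,\mathbb R)$), but the argument is the same.
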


\begin{proof}
Let us fix $g \in \mathcal{C}(\Lambda, \mathbb R)$. From the
convergence in $\mu_\phi$-measure of the sequence of functions $z
\to \mu^z_n(g), n \ge 1$ obtained from Theorem \ref{teorema}, it
follows that there exists a borelian set $E(g)$ with
$\mu_\phi(E(g)) = 1$ and a subsequence $(n_p)_p$ so that
$\mu^z_{n_p}(g) \mathop{\to}\limits_p \mu_\phi(g), z \in E(g)$.

Let us consider now a sequence of functions $(g_m)_m$ dense in
$\mathcal{C}(\Lambda, \mathbb R)$. By applying a diagonal sequence
procedure we shall obtain then a subsequence $(n_k)_k$ so that
$\mu^z_{n_k}(g) \mathop{\to}\limits_k \mu_\phi(g), \forall z \in
\mathop{\cap}\limits_m E(g_m)$. We notice also that
$\mu_\phi(\mathop{\cap}\limits_m E(g_m)) = 1$, since
$\mu_\phi(E(g_m)) = 1, m \ge 1$. \  But since any continuous
function $g$ can be approximated in the uniform norm by a function
$g_m$, it will follow that $\mu^z_{n_k}(g) \mathop{\to}\limits_k
\mu_\phi(g), \forall z \in E:= \mathop{\cap}\limits_m E(g_m)$.
Therefore we obtain that $\mu^z_{n_k} \mathop{\to}\limits_k
\mu_\phi, z \in E$, i.e we have weak convergence of the measures
$\mu^z_{n_k}, k \ge 1$, on a set of $z$ having full
$\mu_\phi$-measure in $\Lambda$.

\end{proof}

The Theorem applies to  \textbf{Anosov endomorphisms} in
particular.

\begin{cor}\label{Anosov}

Assume that $f:M \to M$ is an Anosov endomorphism without critical
points on a Riemannian manifold. Let also $\phi$ a Holder
continuous potential on $M$ and $\mu_\phi$ the equilibrium measure
of $\phi$. Then $$ \int_M |< \frac 1n \mathop{\sum}\limits_{y \in
f^{-n} (x)\cap \Lambda} \frac{e^{S_n\phi(y)}}{\mathop{\sum}\limits_{z \in
f^{-n}(x)\cap \Lambda} e^{S_n\phi(z)}} \cdot \mathop{\sum}\limits_{i=0}^{n-1}
\delta_{f^i y} - \mu_\phi, g >| d\mu_\phi(x)
\mathop{\to}\limits_{n \to \infty} 0 , \forall g \in
\mathcal{C}(M, \mathbb R) $$

\end{cor}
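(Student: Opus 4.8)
The plan is to observe that an Anosov endomorphism is precisely the instance of Theorem \ref{teorema} in which the basic set is the whole manifold, so that the corollary follows by a direct application once the hypotheses of the theorem are checked for $\Lambda := M$.

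First I would verify that $\Lambda := M$ is a basic (folded) set for $f$ in the sense of Definition \ref{basic}. Since $f$ is assumed to have no critical points and $M$ is compact, $Df_x$ is invertible at every $x$, so $f$ is a local diffeomorphism; a local diffeomorphism of a compact manifold is a covering map, hence surjective, open and finite-to-one. Surjectivity gives $f(M) = M$, so with the neighbourhood $U := M$ we have $\mathop{\cap}\limits_{n \in \mathbb{Z}} f^n(U) = M$, i.e. $M$ is locally maximal. Transitivity of $f$ on $M$ may be taken as part of the standing hypotheses on Anosov endomorphisms, and in any case holds for the examples of interest (hyperbolic toral and infranilmanifold endomorphisms and their small perturbations); thus $M$ is indeed a basic set. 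Moreover $f$ is finite-to-one on $M$ and $\mathcal{C}_f \cap M = \mathcal{C}_f = \emptyset$ by hypothesis.

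Next, by definition an Anosov endomorphism is uniformly hyperbolic on all of $M$, with a continuous stable subbundle $x \mapsto E^s_x$ over $M$ and unstable subspaces $\hat x \mapsto E^u_{\hat x}$ over the natural extension $\hat M$ depending on the whole prehistory $\hat x$ — precisely the structure set up in Section \ref{sec2}. Hence every hypothesis of Theorem \ref{teorema} is satisfied with $\Lambda = M$. Applying that theorem to $\phi$ and to an arbitrary $g \in \mathcal{C}(M, \mathbb{R})$, and writing out $\mu_n^x$ as in (\ref{mu-n}) (noting that $f^{-n}(x) \cap \Lambda = f^{-n}(x) \cap M = f^{-n}(x)$, so all $n$-preimages are automatically counted), yields exactly the asserted convergence $\int_M |<\mu_n^x - \mu_\phi, g>| \, d\mu_\phi(x) \to 0$ as $n \to \infty$. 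There is essentially no obstacle beyond the verification that $M$ is a basic set in the precise sense required — in particular local maximality with $U = M$ together with transitivity — since once that is established the statement is an immediate specialization of Theorem \ref{teorema}, requiring no further argument.
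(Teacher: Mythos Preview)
Your proposal is correct and matches the paper's approach: the paper gives no explicit proof of this corollary, treating it as an immediate specialization of Theorem~\ref{teorema} with $\Lambda = M$ (the sentence preceding the statement is simply ``The Theorem applies to Anosov endomorphisms in particular''). Your additional verification that $M$ is a basic set in the sense of Definition~\ref{basic} --- via the covering-map argument to get surjectivity, finite-to-oneness, and local maximality with $U = M$ --- is more detail than the paper supplies but is exactly what is implicitly being used.
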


We can compare these results to the usual SRB measure for the
endomorphism $f$, defined as a measure $\mu$ having the property
that for any measurable partition $\eta$ of $\hat M$ subordinate
to the lifts of the local unstable manifolds, and for $\hat \mu$
almost all $\hat x \in \hat M$, the projection of the conditional
measure of $\hat\mu$, namely $\pi_* (\hat \mu^\eta_{\hat x})$ is
absolutely continuous with respect to the induced Lebesgue measure
on $W^u_{\hat x}$ (\cite{L}, \cite{QS}). In \cite{QS} it is shown
that $\mu$ satisfies the SRB property for the Anosov endomorphism
$f$ if and only if $\frac 1n
\mathop{\sum}\limits_{i=0}^{n-1}\delta_{f^i x}
\mathop{\to}\limits_n \mu$ for Lebesgue almost every $x\in M$.

\begin{cor}\label{inv}
Let $f :M \to M$ be an Anosov endomorphism, $\phi:\Lambda \to
\mathbb R$ a Holder potential and assume that the equilibrium
measure $\mu_\phi$ is absolutely continuous with respect to the
Lebesgue measure on $M$. Then the measure $\mu_\phi$ with this
property is unique, it is an SRB measure and it also satisfies an
inverse SRB condition in the sense that there exists a set $E$ of
full Lebesgue measure in $M$ and a sequence $(n_k)_k$ such that
$\mu^z_{n_k} \mathop{\to}\limits_k \mu_\phi, z \in E$.
\end{cor}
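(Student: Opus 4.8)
The plan is to combine the statement of Theorem~\ref{teorema} (actually in its Corollary~\ref{subsequence} form) with the assumed absolute continuity of $\mu_\phi$, and with the uniqueness of equilibrium measures from Proposition~\ref{eq-endo}. First I would establish \textbf{uniqueness}: suppose $\nu$ were another $f$-invariant probability measure absolutely continuous with respect to Lebesgue on $M$ and satisfying the SRB property. On an Anosov endomorphism without critical points, $M$ is itself a basic set (it is compact, $f$-invariant, transitive as an Anosov endomorphism on a connected manifold — or one passes to a transitive component — and equals $\bigcap_{n\in\mathbb Z}f^n(U)$ for $U$ a neighbourhood, here $U=M$), so Proposition~\ref{eq-endo} applies and gives a unique equilibrium measure $\mu_\phi$ for the Hölder potential $\phi$. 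The key classical fact is that an absolutely continuous invariant measure for an Anosov endomorphism which is SRB is characterized by the Pesin-type (Ledrappier) entropy formula, hence it is the equilibrium state of the unstable Jacobian potential $\Phi^u$; but more to the point, here $\phi$ is \emph{given}, and the hypothesis is that $\mu_\phi$ itself happens to be absolutely continuous, so uniqueness of $\mu_\phi$ as equilibrium state of $\phi$ (Proposition~\ref{eq-endo}) already forces any a.c.\ equilibrium state of $\phi$ to coincide with it. Thus ``the measure $\mu_\phi$ with this property is unique'' is immediate from Proposition~\ref{eq-endo}: there is only one equilibrium measure of $\phi$ at all, so in particular only one that is absolutely continuous.

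Next I would address the \textbf{SRB property}. By \cite{QS} (quoted in the excerpt), a measure $\mu$ is SRB for the Anosov endomorphism $f$ if and only if $\frac1n\sum_{i=0}^{n-1}\delta_{f^i x}\to\mu$ weakly for Lebesgue-a.e.\ $x\in M$. Since $\mu_\phi$ is assumed absolutely continuous and $f$-invariant and ergodic (being an equilibrium state, it is ergodic as noted after Proposition~\ref{eq-endo}), the Birkhoff ergodic theorem gives $\frac1n\sum_{i=0}^{n-1}\delta_{f^i x}\to\mu_\phi$ for $\mu_\phi$-a.e.\ $x$, and because $\mu_\phi\ll\mathrm{Leb}$ with positive density on a full-measure set — together with the fact that an ergodic a.c.\ invariant measure of an Anosov endomorphism has density bounded below on its (open, by transitivity) support, which is all of $M$ — the convergence holds on a set of full \emph{Lebesgue} measure. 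Hence by the \cite{QS} criterion $\mu_\phi$ is SRB. (Alternatively one invokes directly that an a.c.\ invariant measure satisfying Birkhoff convergence Lebesgue-a.e.\ is SRB, which is exactly the cited equivalence.)

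Finally, for the \textbf{inverse SRB condition}, I would apply Corollary~\ref{subsequence} (equivalently Corollary~\ref{Anosov} followed by the diagonal-subsequence argument of Corollary~\ref{subsequence}) to the basic set $\Lambda=M$: it yields a set $E\subset M$ with $\mu_\phi(E)=1$ and a subsequence $(n_k)_k$ with $\mu^z_{n_k}\to\mu_\phi$ weak-$\star$ for every $z\in E$. Since $\mu_\phi\ll\mathrm{Leb}$ and, conversely, on an Anosov endomorphism an ergodic SRB measure has support equal to $M$ with density bounded away from zero, the set $E$ also has full Lebesgue measure. (If one is cautious about the density being bounded below, one argues instead: the complement $M\setminus E$ has $\mu_\phi$-measure zero; if it had positive Lebesgue measure, then — using that the SRB basin has full Lebesgue measure and comparing with the forward Birkhoff statement, or using regularity of $\mu_\phi$ — one reaches a contradiction. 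The cleanest route is simply to note $\mu_\phi\ge c\cdot\mathrm{Leb}$ on $M$ for some $c>0$, valid because the density of an ergodic a.c.\ invariant measure of a transitive Anosov endomorphism is bounded below on $M$.) Thus $E$ works for the inverse SRB condition.

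The main obstacle is the passage from ``$\mu_\phi(E)=1$'' to ``$\mathrm{Leb}(E)=1$'': this requires knowing not merely $\mu_\phi\ll\mathrm{Leb}$ but that $\mu_\phi$ and $\mathrm{Leb}$ are \emph{mutually} absolutely continuous on $M$ (equivalently, the density is positive Lebesgue-a.e., which follows from ergodicity of $\mu_\phi$ plus $f$ being a local diffeomorphism with $\mathrm{supp}\,\mu_\phi=M$). Once that equivalence of null sets is in hand, all three assertions — uniqueness, SRB, and inverse SRB — follow by assembling Proposition~\ref{eq-endo}, the \cite{QS} criterion, and Corollary~\ref{subsequence}.
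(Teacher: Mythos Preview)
Your approach is essentially the same as the paper's: the paper's proof is a one-sentence appeal to Theorem~\ref{teorema}, Corollary~\ref{Anosov}, and the results of \cite{L} and \cite{QS}, and you have simply unpacked those citations. Your explicit flagging of the passage from $\mu_\phi(E)=1$ to full Lebesgue measure (i.e.\ the need for mutual absolute continuity $\mu_\phi\sim\mathrm{Leb}$) is in fact more careful than the paper, which leaves that point implicit in the references to \cite{L} and \cite{QS}.
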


\begin{proof}
The proof follows immediately from Theorem \ref{teorema},
Corollary \ref{Anosov} and from the results of \cite{L} and
\cite{QS}. The potential $\phi$ for which $\mu_\phi$ is SRB, can
be taken in fact to be the unstable potential.

\end{proof}

A classical example of Anosov endomorphism without critical points
is a \textbf{toral hyperbolic endomorphism} $f_A:\mathbb T^m \to
\mathbb T^m$, associated to an $m \times m$ integer valued matrix
$A$ whose eigenvalues $\lambda_i$ all have absolute values
different from 1, and whose determinant $\text{det}(A)$ is not
necessarily 1 in absolute value. Each point from $\mathbb T^m$ has
exactly $|\text{det} A|$ preimages in $\mathbb T^m$. If we
consider the potential $\phi \equiv 0$, then the equilibrium
measure of $\phi$ is the Haar measure $\omega$ on $\mathbb T^m$
which is also the measure of maximal entropy (its entropy is equal
to $\mathop{\sum} \limits_{\lambda_i, |\lambda_i|
>1} \log |\lambda_i|$, where each eigenvalue is taken with its
multiplicity). In \cite{Wa} it was proved the asymptotic
distribution of periodic points towards $\omega$; here we prove
the convergence towards $\omega$, which is also the unique measure
of maximal entropy,  of the measures $\mu^x_n, n $ corresponding
to the potential $\phi \equiv 0$, for $\omega$-almost all points
$x \in \mathbb T^m$. We thus obtain the existence of an
\textit{inverse SRB measure} in this case.

Moreover Theorem \ref{teorema} applies also to smooth (say
$\mathcal{C}^2$) perturbations $f_{A, \vp}$, of hyperbolic toral
endomorphisms $f_A$. Indeed they will also be hyperbolic on the
$m$-dimensional torus $\mathbb T^m$ and the basic set considered
is the whole $\mathbb T^m$. Also the non-invertible map $f_{A,
\vp}$ remains $|\text{det}(A)|$-to-1 on $\mathbb T^m$. We thus
obtain the weighted distribution (with respect to a Holder
potential $\phi$ on $\mathbb T^m$) of preimage sets of $f_{A,
\vp}$, with respect to the equilibrium measure $\mu_\phi$ of
$\phi$, for \textbf{perturbations of hyperbolic toral
endomorphisms}.

Anosov endomorphisms on \textbf{infranilmanifolds} represent a
generalization of toral linear endomorphisms (see the Remark at
the end of Section 1). Let us notice that our Theorem
\ref{teorema} applies to Anosov endomorphisms on infranilmanifolds
which are not topologically conjugate to Anosov diffeomorphisms
nor to expanding maps. Thus, besides Theorem \ref{teorema}, one
cannot apply any of the previously known results for the
distributions of preimages from the case of diffeomorphisms
(\cite{Bo}), or expanding endomorphisms (\cite{Ru}).

Theorem \ref{teorema} applies also to \textbf{hyperbolic basic
sets of saddle type} for endomorphisms which \textit{are not
necessarily Anosov}, like the class of examples from \cite{MU},
namely skew products with overlaps in their fibers $F:X \times V
\to X \times V, F(x, y) = (f(x), h(x, y))$, where $f: X \to X$ is
an expanding map on a compact metric space, while $h(x, \cdot) :V
\to V$ (denoted also by $h_x$) is a contraction on an open convex
set $V \subset \mathbb R^m$; $h_x$ is assumed to depend
continuously on $x \in X$. The basic set is in this case given by
$$\Lambda:= \mathop{\cup}\limits_{x \in
X}\mathop{\cap}\limits_{n=0}^\infty \mathop{\cup}\limits_{z \in
f^{-n} x} h_z^n(\bar V),$$ where $h_z^n:= h_{f^{n-1}z} \circ
\ldots \circ h_z, n \ge 1, z \in X$. In \cite{MU} we studied the
conditional measures of equilibrium states induced on fibers and
their relation to the stable dimension of fibers. So from Theorem
\ref{teorema} we obtain the weighted distributions of preimages of
the non-invertible map $F$ over $\Lambda$, with respect to
equilibrium measures of Holder potentials. \

We can collect the above remarks in the following:

\begin{cor}\label{final}

a) The conclusions of Corollary \ref{Anosov} hold in particular
for toral hyperbolic endomorphisms and for smooth perturbations of
these.

b) The conclusions of Theorem \ref{teorema} hold for the basic
sets of hyperbolic skew products with overlaps in their fibers
from \cite{MU}, as well as for the attractors of the noninvertible
horseshoes from \cite{Bot}.

\end{cor}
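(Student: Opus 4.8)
The plan is to verify, in each of the listed examples, the hypotheses of Theorem~\ref{teorema} (respectively Corollary~\ref{Anosov}) and then invoke those results directly. The only conditions requiring checking are the dynamical-geometric ones: that $f$ is $\mathcal C^2$, that $\Lambda$ is a basic set on which $f$ is hyperbolic, that $f$ is finite-to-one on $\Lambda$, and that $\mathcal C_f\cap\Lambda=\emptyset$ (for Corollary~\ref{Anosov} one additionally needs $f$ Anosov without critical points and $\Lambda=M$).

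For part a), I would first treat a toral hyperbolic endomorphism $f_A:\mathbb T^m\to\mathbb T^m$ directly. Here $Df_A\equiv A$ is everywhere invertible since $\det A\neq 0$, so $\mathcal C_{f_A}=\emptyset$; the eigenvalue condition makes $f_A$ Anosov with $\Lambda=\mathbb T^m$; and, as recalled in the Introduction, every point has exactly $|\det A|$ preimages, so $f_A$ is finite-to-one. Hence Corollary~\ref{Anosov} applies. For a $\mathcal C^2$-small perturbation $f_{A,\vp}$ I would note that the derivative stays $\mathcal C^0$-close to $A$, hence invertible, so $\mathcal C_{f_{A,\vp}}=\emptyset$; that the Anosov property of an endomorphism persists under $\mathcal C^1$-small perturbations (working on the natural extension, the hyperbolic splitting along prehistories survives); and that $f_{A,\vp}$, being a local diffeomorphism of a compact connected manifold, is a covering map whose number of sheets equals its degree, which is homotopy invariant and hence still $|\det A|$, so $f_{A,\vp}$ remains exactly $|\det A|$-to-1 on $\mathbb T^m$. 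Then Corollary~\ref{Anosov} applies again, proving a).

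For part b), consider first the skew products with overlaps $F(x,y)=(f(x),h(x,y))$ from~\cite{MU} (taking $X$ to be a Riemannian manifold, as in the relevant examples, so that $X\times V$ is a manifold). On $\Lambda$ the tangent bundle splits into a base (horizontal) direction, expanded because $f$ is expanding, and a fiber (vertical) direction, contracted because each $h_x$ is a contraction; this is exactly a hyperbolic splitting for $F|_\Lambda$, the horizontal direction playing the role of the unstable one (depending on the prehistory in $X$) and the vertical one the stable. The set $\Lambda$ is compact and locally maximal by its very definition as an intersection of forward images, and is transitive once $f$ is chosen transitive on $X$; $F$ is a local diffeomorphism near $\Lambda$, so $\mathcal C_F\cap\Lambda=\emptyset$; and $F$ is finite-to-one on $\Lambda$ because $f$ is (expanding maps on compact spaces are finite-to-one) and each $h_x$ is injective. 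Theorem~\ref{teorema} then applies. For the noninvertible horseshoes of~\cite{Bot}, I would observe that the defining map is $\mathcal C^2$, the horseshoe $\Lambda$ is a hyperbolic basic set of saddle type by construction, the map is a local diffeomorphism in a neighbourhood of $\Lambda$ (hence $\mathcal C_f\cap\Lambda=\emptyset$), and for the parameters producing self-intersections it is still only boundedly-many-to-one on $\Lambda$ (finitely many symbolic branches), i.e.\ finite-to-one; so Theorem~\ref{teorema} applies there as well, completing b).

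These are essentially routine verifications, but the step I expect to need the most care is the finite-to-one condition together with $\mathcal C_f\cap\Lambda=\emptyset$: in the skew-product case one must use that $f$ is genuinely finite-to-one (not merely continuous and open) and that the fiber maps $h_x$ are injective, and in the perturbation case one must argue that the covering degree — hence the \emph{exact} multiplicity $|\det A|$ — is preserved, not merely that the number of preimages stays bounded.
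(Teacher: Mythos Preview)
Your proposal is correct and follows essentially the same approach as the paper. The paper does not give a formal proof of this Corollary; it simply states ``We can collect the above remarks in the following'' after paragraphs that verify (more tersely than you do) that toral endomorphisms and their perturbations are Anosov, $|\det A|$-to-1, and without critical points, and that the skew products from \cite{MU} and the horseshoes from \cite{Bot} are hyperbolic on basic sets of saddle type --- exactly the hypothesis-checking you outline, with your treatment being somewhat more detailed (e.g.\ your homotopy-invariance argument for the covering degree under perturbation).
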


\

\textbf{Acknowledgements:} This work was supported by
CNCSIS-UEFISCSU through Project PN II Idei-1191/2008.

\

\textbf{Email:}  Eugen.Mihailescu\@@imar.ro

Institute of Mathematics of the Romanian Academy, P. O. Box 1-764,

RO 014700, Bucharest, Romania.

Webpage: www.imar.ro/\~~mihailes

\end{document}